\def\NAT@def@citea{\def\@citea{\NAT@separator}}
\theoremstyle{plain}
\newtheorem{theorem}{Theorem}[section]
\newtheorem{lemma}[theorem]{Lemma}
\theoremstyle{definition}
\theoremstyle{remark}
\newtheorem{remark}{Remark}
\begin{document}


\title{Tikhonov regularized second-order plus first-order  primal-dual dynamical systems with asymptotically vanishing damping for linear equality constrained convex optimization problems}

\author{
\name{Ting-Ting Zhu\textsuperscript{a}, Rong Hu\textsuperscript{b}\thanks{CONTACT Rong Hu. Email: ronghumath@aliyun.com} and Ya-Ping Fang\textsuperscript{a}\thanks{CONTACT Ya-Ping Fang. Email: ypfang@scu.edu.cn}}
\affil{\textsuperscript{a} Department of Mathematics, Sichuan University, Chengdu, Sichuan, P.R. China; \textsuperscript{b}Department of Applied Mathematics, Chengdu University of Information Technology, Chengdu, Sichuan, P.R. China}
}

\maketitle

\begin{abstract}
In this paper, in the setting of Hilbert spaces, we consider a Tikhonov regularized second-order plus first-order primal-dual dynamical system with asymptotically vanishing damping for a linear equality constrained convex optimization problem. It is shown that convergence properties of the proposed dynamical system depend  upon the choice of the Tikhonov regularization parameter. When the Tikhonov regularization parameter decays rapidly to zero, we derive the fast convergence rates of the primal-dual gap, the objective residual, the feasibility violation and the gradient norm of the objective function along the generated trajectory. When the Tikhonov regularization parameter decreases slowly to zero, we prove the strong convergence of the primal trajectory of the Tikhonov regularized dynamical system to the minimal norm solution of the linear equality constrained convex optimization problem. Numerical experiments are performed to illustrate the efficiency of our approach.
\end{abstract}

\begin{keywords}
Linear equality constrained convex optimization problem; second-order plus first-order primal-dual dynamical system; asymptotically vanishing damping; Tikhonov regularization; convergence rate; strong convergence
\end{keywords}

\amscodename\  34D05; 37N40; 46N10; 90C25.

\section{Introduction}
Let $\mathcal{X}$ and $\mathcal{Y}$ be two real Hilbert spaces which are endowed with the inner product $\langle \cdot, \cdot\rangle$ and the associated norm $\|\cdot\|$. In this paper, we are concerned with the  linear equality constrained convex  optimization problem
\begin{equation}\label{z1}
	\min_{x\in\mathcal{X}}  \quad f(x), \quad \text{ s.t. }  \  Ax = b,
\end{equation}
where $A: \mathcal{X}\rightarrow \mathcal{Y}$ is a continuous linear operator, $b\in\mathcal{Y}$, and $f: \mathcal{X}\rightarrow \mathbb{R}$ is a continuously differentiable convex function such that the gradient operator $\nabla f$ of $f$  is Lipschitz continuous over  $\mathcal{X}$. The problem \eqref{z1} is a basic model for many important applications arising in various areas, such as machine learning \cite{ZLinandLiHandFang(2020)}, image recovery \cite{GoldsteinTandDonoghue(2014)}, the energy dispatch of power grids \cite{PYiandHongYandLiu(2015)} and network optimization \cite{ZengXLandLeiJLandChenJ(2022)}. When $A=0$ and $b=0$, the problem \eqref{z1} collapses to the unconstrained convex optimization problem
\begin{equation}\label{up-f}
	\min_{x\in\mathcal{X}}  \quad f(x).
\end{equation}

In the literature there has been a flurry of research  on second-order dynamical systems for the unconstrained optimization problem \eqref{up-f}. A seminal work is due to Polyak \cite{Polyak(1964)} who proposed the heavy ball with friction system
\begin{eqnarray*}
	\text{(HBF)}\quad\quad 
	\ddot{x}(t)+\gamma \dot{x}(t)+\nabla f(x(t))=0, 
\end{eqnarray*}
where the damping coefficient $\gamma>0$ is a constant. Another seminal work is due to  Su, Boyd, and Cand$\grave{e}$s \cite{SuBoydandCandes(2016)} who proposed an inertial dynamical system with the asymptotically vanishing damping 
\begin{eqnarray*}
	\text{(AVD)}_{\alpha}\quad\quad
	\ddot{x}(t)+\frac{\alpha}{t} \dot{x}(t)+\nabla f(x(t))=0, 
\end{eqnarray*}
where $\alpha>0$. It has been shown by Su, Boyd, and Cand$\grave{e}$s  \cite{SuBoydandCandes(2016)} that $\text{(AVD)}_{\alpha}$ with $\alpha=3$ is  the continuous time limit of Nesterov's accelerated gradient algorithm \cite{Nesterov(1983), Nesterov(2013)}. In the case $\alpha\geq3$, they proved that the objective residual along the trajectory generated by $\text{(AVD)}_{\alpha}$ enjoys the $\mathcal{O}(\frac{1}{t^2})$ convergence rate.  $\text{(AVD)}_{\alpha}$  provides an important model for designing fast methods for the problem \eqref{up-f} since the damping coefficient $\frac{\alpha}{t}$ vanishes in a controlled manner, neither too fast nor too slowly.  For more results on  $\text{(AVD)}_{\alpha}$  and other second-order dynamical systems with time-dependent damping we refer to \cite{Attouch2018MP, Wilson2021, ShiBMP}.

The Tikhonov regularization techniques have played an important role in dynamical system methods, which may assure the strong convergence of the generated trajectory to the minimal norm solution of the problem under consideration. For results on the Tikhonov regularized first-order dynamical systems we refer to  \cite{AlvarezandCabot(2006), Attouch(1996), CominettiPS(2008)}. Recently, Tikhonov regularizations of second-order dynamical systems for unconstrained optimization problems have attached much attention of  researchers.  Attouch and Czarnecki \cite{AttouchandCzarnecki(2002)} considered a Tikhonov regularization  of  Polyak's heavy ball with friction system $\text{(HBF})$ by  proposing  the following dynamical system
\begin{eqnarray*}
	\text{(HBF)}_\epsilon \quad\quad \ddot{x}(t)+\gamma \dot{x}(t)+\nabla f(x(t))+\epsilon(t)x(t)=0,  
\end{eqnarray*}
where $\epsilon:[0,+\infty)\to[0,+\infty)$ is the Tikhonov regularization parameter.  Under the condition $\int_{0}^{+\infty}\epsilon(t)dt=+\infty$,  Attouch and Czarnecki \cite{AttouchandCzarnecki(2002)} showed that the trajectory generated by $\text{(HBF)}_\epsilon$ converges strongly to the minimal norm solution of the problem \eqref{up-f}. See also \cite{AttouchandCzarnecki(2017), Cabot(2004), JendoubiandMay(2010)}. Attouch, Chbani, and Riahi \cite{AttouchZH2018} further considered a Tikhonov regularization of  $\text{(AVD)}_{\alpha}$ by proposing the  dynamical system
\begin{eqnarray*}
	\text{(AVD)}_{\alpha, \epsilon}\quad\quad
	\ddot{x}(t)+\frac{\alpha}{t} \dot{x}(t)+\nabla f(x(t))+\epsilon(t)x(t)=0. 
\end{eqnarray*}
Attouch, Chbani, and Riahi \cite{AttouchZH2018} showed that  the convergence properties of $\text{(AVD)}_{\alpha, \epsilon}$ depend  upon the speed of convergence of the parameter $\epsilon(t)$ to zero: (i)  When $\epsilon(t)$ decays rapidly to zero, $\text{(AVD)}_{\alpha, \epsilon}$ owns the fast convergence properties analogous to $\text{(AVD)}_{\alpha}$. (ii)  When $\epsilon(t)$ decreases slowly to zero, the trajectory of $\text{(AVD)}_{\alpha, \epsilon}$ converges strongly to the  minimal norm solution of the problem \eqref{up-f}. In the case $\epsilon(t)=\frac{1}{t^r}$ with $r>0$, it has been shown that $r=2$ is a critical value which separates two above cases. For more results on the Tikhonov regularized second-order dynamical systems for unconstrained optimization problems, we refer the reader to \cite{AttouchandSzilardLaszlo(2021), Laszlo(2023), AttouchZH2018, XuWen2021}.

Very recently, some researchers were devoted to the study of second-order primal-dual dynamical system for the linear equality constrained convex optimization problem \eqref{z1}. Zeng, Lei, and Chen \cite{ZengXLandLeiJLandChenJ(2022)} proposed and studied the following second-order primal-dual dynamical system
\begin{eqnarray*}
	\text{(Z-AVD)}_{\alpha, \theta}\quad
	\begin{cases}
		\ddot{x}(t)+\frac{\alpha}{t}\dot{x}(t)&=-\nabla f(x(t))-A^T(\lambda(t)+\theta t\dot{\lambda}(t))-A^T(Ax(t)-b),\\
		\ddot{\lambda}(t)+\frac{\alpha}{t}\dot{\lambda}(t)&=A(x(t)+\theta t\dot{x}(t))-b,
	\end{cases}
\end{eqnarray*}
where $t\ge t_0>0$, $\alpha>0$ and $\theta>0$. Zeng, Lei, and Chen \cite{ZengXLandLeiJLandChenJ(2022)} extended the work of Su, Boyd, and Cand$\grave{e}$s \cite{SuBoydandCandes(2016)} from the unconstrained optimization problem \eqref{up-f} to the linear equality constrained optimization problem \eqref{z1}. He, Hu, and Fang \cite{HeHuFangetal(2021)} and Attouch et al. \cite{AttouchADMM(2022)} further proposed and studied second-order primal-dual dynamical systems with time-dependent damping coefficients for a linear equality constrained convex optimization problem with a separable structure.  Fast convergences of primal-dual gap and feasibility violation along the trajectories were derived in \cite{ZengXLandLeiJLandChenJ(2022), HeHuFangetal(2021), AttouchADMM(2022)}, and  fast convergence of the objective residual along the trajectory was established in \cite{ AttouchADMM(2022)}. Besides the fast convergence properties of the primal-dual gap, the objective residual, and the feasibility violation along the trajectories, Bot and Nguyen \cite{BNguyen2022} further proved that the primal-dual trajectory of a second-order primal-dual dynamical system  converges weakly to a primal-dual optimal solution of the linear equality constrained convex optimization problem \eqref{z1}. It is worth mentioning that the second-order dynamical systems considered in \cite{ZengXLandLeiJLandChenJ(2022), HeHuFangetal(2021), AttouchADMM(2022), BNguyen2022} involve second-order terms for both the primal variable and the dual variable. Different from \cite{ZengXLandLeiJLandChenJ(2022), HeHuFangetal(2021),AttouchADMM(2022),BNguyen2022},  He, Hu, and Fang \cite{HeHFetal(2022)}  considered a second-order plus first-order  primal-dual dynamical system with a fixed damping coefficient and a time-dependent scaling coefficient for the problem \eqref{z1}, where the second-order term appears only in the primal variable. Recently, He, Hu, and Fang \cite{HeHFiietal(2022)} further proposed  the following second-order plus first-order primal-dual dynamical system
\begin{eqnarray*}
	\text{(He-AVD)}_{\alpha}\quad
	\begin{cases}
		\ddot{x}(t)+\frac{\alpha}{t}\dot{x}(t)&=-\beta(t)(\nabla f(x(t))+A^T\lambda(t))
		+\varepsilon(t),\\
		\dot{\lambda}(t)&=t\beta(t)(A(x(t)+ \frac{t}{\alpha-1}\dot{x}(t))-b), 
	\end{cases}
\end{eqnarray*}
where $t\geq t_0>0$, $\alpha>1$, $\beta: [t_0, +\infty)\to (0, +\infty)$ is a scaling coefficient, and $\varepsilon: [t_0, +\infty)\to \mathbb{R}^n$ can be viewed as a perturbation term.  He, Hu, and Fang \cite{HeHFiietal(2022)} proved that all of the primal-dual gap, the objective function value and the feasibility violation along the trajectories generated by $\text{(He-AVD)}_{\alpha}$ own the fast convergence properties, which are analogous to the convergence results of second-order plus second-order dynamical systems considered in \cite{ZengXLandLeiJLandChenJ(2022), HeHuFangetal(2021), AttouchADMM(2022), BNguyen2022}.

In this paper,  we consider the following Tikhonov regularized second-order plus first-order primal-dual dynamical system  with asymptotically vanishing damping
\begin{eqnarray}\label{z2}
	\begin{cases}
		\ddot{x}(t)+\frac{\alpha}{t}\dot{x}(t)&=-\nabla f(x(t))-A^T\lambda(t)-\rho A^T(Ax(t)-b)
		-\epsilon(t)x(t),\\
		\dot{\lambda}(t)&=t(A(x(t)+\dfrac{t}{\alpha-1}\dot{x}(t))-b),
	\end{cases}
\end{eqnarray}
where $t\geq t_0>0$, $\alpha>1$, $\rho\ge 0$ is the penalty parameter of the augmented Lagrangian function $\mathcal{L}_{\rho}(x, \lambda)$, and $\epsilon: [t_0, +\infty)\rightarrow [0,+\infty)$  is the Tikhonov regularization parameter which  is a $\mathcal{C}^1$ nonincreasing function satisfying  $\lim_{t\rightarrow+\infty}\epsilon(t)=0$. When $\epsilon(t)=0$ and $\rho=0$, the primal-dual dynamical system \eqref{z2} becomes a particular case of  $\text{(He-AVD)}_{\alpha}$ considered in \cite{HeHFiietal(2022)}. To the best of our knowledge, this is the first time  in the literature to consider Tikhonov regularized inertial primal-dual systems.

Following the approaches developed in \cite{AttouchZH2018, HeHFiietal(2022)}, we shall analyze the convergence properties of the trajectories generated by the dynamical system \eqref{z2} with $\alpha\ge 3$. Our main contributions are summarized as follows: For $\alpha\ge 3$,
\begin{itemize}
	\item[(a)] when   $\int_{t_{0}}^{+\infty}t\epsilon(t)dt<+\infty$ which reflects a fast vanishing $\epsilon(t)$, all of the primal-dual gap, the objective residual and the feasibility violation along the trajectories of \eqref{z2} own the $\mathcal{O}(\frac{1}{t^2})$ convergence rate.
	
	\item[(b)] under the condition $\int_{t_0}^{+\infty}\frac{\epsilon(t)}{t}dt<+\infty$, we derive a general minimization property of the trajectories generated by \eqref{z2}. Further, when  $\epsilon(t)=\frac{c}{t^r}$ with $0<r\le 2$ and $c>0$, the convergence rate depends upon $r$.

	\item[(c)]   when  $\lim_{t\to+\infty}t^2\epsilon(t)= +\infty$ which reflects a slowly vanishing $\epsilon(t)$, and  under the condition $\int_{t_0}^{+\infty}\frac{\epsilon(t)}{t}dt<+\infty$,  the primal trajectory $x(t)$ of \eqref{z2} converges strongly to the minimal norm solution of the problem \eqref{z1}.	
\end{itemize}

The rest of this paper is organized as follows: In section 2, we give some preliminary results. In section 3, we prove the existence and uniqueness of a strong global solution of the proposed dynamical system \eqref{z2}, and we also analyze the minimization properties of the trajectories generated by \eqref{z2}.  In section 4,  we prove the strong convergence of  the primal trajectory $x(t)$ generated by \eqref{z2} to the minimal norm solution of the problem \eqref{z1} when $\epsilon(t)$ decays slowly to zero. In section 5, we perform some numerical experiments to illustrate the efficiency of our approach and make some comparisons with the existing primal-dual dynamical approaches. 

\section{Preliminary results}

The Lagrangian function $\mathcal{L}: \mathcal{X}\times\mathcal{Y}\rightarrow\mathbb{R}$ associated with the problem \eqref{z1}  is defined by 
\begin{eqnarray*}
	\mathcal{L}(x,\lambda)=f(x)+\langle \lambda, Ax-b\rangle
\end{eqnarray*}
and the augmented Lagrangian function $\mathcal{L}_{\rho}: \mathcal{X}\times\mathcal{Y}\rightarrow\mathbb{R}$ is defined by 
\begin{eqnarray*}\label{zz1}
	\mathcal{L}_{\rho}(x,\lambda)=\mathcal{L}(x,\lambda)+\frac{\rho}{2}\|Ax-b\|^2
	=f(x)+\langle \lambda, Ax-b\rangle +\frac{\rho}{2}\|Ax-b\|^2,
\end{eqnarray*}
where $\rho\ge 0$ is the penalty parameter.
The associated dual problems for the problem \eqref{z1} are
\begin{eqnarray}\label{dudu5}
	\max_{\lambda\in\mathcal{Y}}d(\lambda)
\end{eqnarray}
and 
\begin{equation}\label{dp-fyp}
	\max_{\lambda\in\mathcal{Y}}d_{\rho}(\lambda),
\end{equation}
where 
$$d(\lambda)=\min_{x\in\mathcal{X}}\mathcal{L}(x,\lambda)$$and 
$$d_{\rho}(\lambda)=\min_{x\in\mathcal{X}}\mathcal{L}_{\rho}(x,\lambda).$$
If $x^*\in\mathcal{X}$ is a solution of the problem \eqref{z1} and $\lambda^*\in\mathcal{Y}$ is a solution of the problem \eqref{dudu5}, then $(x^*,\lambda^*)\in\mathcal{X}\times\mathcal{Y}$ is called a primal-dual optimal solution. In what follows, we always suppose that the primal-dual optimal solution set $\Omega$ associated with the problem \eqref{z1} is nonempty. As a consequence, the solution set $S$ of the problem \eqref{z1} is always nonempty. It is well-known that 
\begin{eqnarray}\label{zc3}
	(x^*, \lambda^*)\in\Omega&\Leftrightarrow& \left\{\begin{aligned}&\nabla f(x^*)+A^T \lambda^*&=0\\ &Ax^*-b&=0\end{aligned} \right.\nonumber\\
	&\Leftrightarrow& \left\{\begin{aligned}&\nabla f(x^*)+A^T \lambda^*+\rho A^T( Ax^*-b)&=0\\ &Ax^*-b&=0\end{aligned} \right.\nonumber\\
	&\Leftrightarrow&\left\{\begin{aligned}\nabla_{x}\mathcal{L}_{\rho}(x^*,\lambda^*)&=0\\
		\nabla_{\lambda}\mathcal{L}_{\rho}(x^*,\lambda^*)&=0\end{aligned} \right.
\end{eqnarray}
and $\mathcal{L}(x,\lambda)$ and $\mathcal{L}_{\rho}(x,\lambda)$ have the same saddle point set $\Omega$.

Given $\lambda\in\mathcal{Y}$, define
$$\mathcal{D}(\lambda)=\arg\min_{x\in\mathcal{X}}\mathcal{L}_{\rho}(x,\lambda).$$ 
Then, when $\rho>0$, $d_{\rho}(\lambda)$ is differentiable (see \cite[Chapter III: Remark 2.5]{FortinandGlowinski(1983)}) and 
$$\nabla d_{\rho}(\lambda)=Ax(\lambda)-b,$$
where $x(\lambda)$ is any element of $\mathcal{D}(\lambda)$. 
If $\lambda^*\in\mathcal{Y}$  is a solution of  the augmented Lagrangian dual problem \eqref{dp-fyp}, then $A(x(\lambda^*))=b$ for any $x(\lambda^*)\in \mathcal{D}(\lambda^*)$. This together with \eqref{zc3} implies
\begin{eqnarray}\label{ztt: onefang}
	\mathcal{D}(\lambda^*)=\arg\min_{x\in\mathcal{X}}\mathcal{L}_{\rho}(x,\lambda^*)\subseteq S,
\end{eqnarray}
where $\rho>0$.
The above  results in  the finite dimensional framework can be found in (\cite[Lemma 3.4]{ZLinandLiHandFang(2020)}, \cite[Lemma 2.1]{HongandLuo:(2017)}, \cite{HanDR(2022)}).

Next, we will list some lemmas which will be used in the convergence analysis.

\begin{lemma}\label{lemma2.1.1}\cite{XuWen2021}
	Let  $z\in \mathcal{X}$  and let $x: [t_0, +\infty)\rightarrow \mathcal{X}$ be a $\mathcal{C}^1$ function. If there exists a constant $\widetilde{C}>0$ such that
	$$\frac{1}{2}\left\|x(t)-z+\frac{t}{\alpha-1} \dot{x}(t)\right\|^2\leq \widetilde{C}, \quad \forall t\geq t_0,$$
	then $(x(t))_{t\geq t_0}$ is bounded.
\end{lemma}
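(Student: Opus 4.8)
The plan is to reduce the hypothesis to a differential inequality for $t^{\alpha-1}(x(t)-z)$ via an integrating-factor trick. First I would set $w(t):=x(t)-z$, so that the assumption reads $\bigl\|w(t)+\tfrac{t}{\alpha-1}\dot w(t)\bigr\|\le\sqrt{2\widetilde C}$ for all $t\ge t_0$, since $\dot w=\dot x$. The key observation is the identity
\begin{equation*}
\frac{d}{dt}\bigl(t^{\alpha-1}w(t)\bigr)=(\alpha-1)t^{\alpha-2}w(t)+t^{\alpha-1}\dot w(t)=(\alpha-1)t^{\alpha-2}\Bigl(w(t)+\frac{t}{\alpha-1}\dot w(t)\Bigr),
\end{equation*}
which is valid because $x$ (hence $w$) is $\mathcal C^1$. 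Taking norms and using the hypothesis gives $\bigl\|\frac{d}{dt}(t^{\alpha-1}w(t))\bigr\|\le(\alpha-1)\sqrt{2\widetilde C}\,t^{\alpha-2}$.

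Next I would integrate this bound over $[t_0,t]$. Since $\alpha>1$, we have $\int_{t_0}^{t}s^{\alpha-2}\,ds=\frac{t^{\alpha-1}-t_0^{\alpha-1}}{\alpha-1}$, so
\begin{equation*}
\bigl\|t^{\alpha-1}w(t)-t_0^{\alpha-1}w(t_0)\bigr\|\le\sqrt{2\widetilde C}\,\bigl(t^{\alpha-1}-t_0^{\alpha-1}\bigr),
\end{equation*}
and therefore $\|t^{\alpha-1}w(t)\|\le t_0^{\alpha-1}\|w(t_0)\|+\sqrt{2\widetilde C}\,t^{\alpha-1}$. Dividing through by $t^{\alpha-1}>0$ yields $\|w(t)\|\le\bigl(\tfrac{t_0}{t}\bigr)^{\alpha-1}\|w(t_0)\|+\sqrt{2\widetilde C}\le\|x(t_0)-z\|+\sqrt{2\widetilde C}$ for every $t\ge t_0$. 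Consequently $\|x(t)\|\le\|z\|+\|x(t_0)-z\|+\sqrt{2\widetilde C}$, i.e.\ $(x(t))_{t\ge t_0}$ is bounded.

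I do not anticipate a genuine obstacle here: the only points requiring care are that $\alpha>1$ is used twice (to make $s^{\alpha-2}$ integrable near $t_0$ with the stated antiderivative, and to ensure $(t_0/t)^{\alpha-1}\le 1$ so the first term stays controlled), and that the differentiation of $t^{\alpha-1}w(t)$ is legitimate, which follows from $x\in\mathcal C^1$. One could alternatively present the argument by applying the norm bound pointwise to the scalar function $t\mapsto\|t^{\alpha-1}w(t)\|$ and invoking its a.e.\ differentiability, but the vector-valued fundamental theorem of calculus applied to $t^{\alpha-1}w(t)$ directly is cleaner and I would use that.
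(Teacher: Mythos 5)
Your proof is correct and complete: the identity $\frac{d}{dt}(t^{\alpha-1}w(t))=(\alpha-1)t^{\alpha-2}\bigl(w(t)+\tfrac{t}{\alpha-1}\dot w(t)\bigr)$, the integration of the resulting norm bound, and the division by $t^{\alpha-1}$ all go through, and you correctly flag that the standing assumption $\alpha>1$ is what makes both the antiderivative of $s^{\alpha-2}$ and the bound $(t_0/t)^{\alpha-1}\le 1$ work. The paper itself states this lemma without proof, citing \cite{XuWen2021}; your integrating-factor argument is the standard proof of that cited result, so there is nothing to compare beyond confirming it is right.
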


\begin{lemma}\label{lemma2.1}\cite[Lemma 6]{HeHFiietal(2022)}
	Suppose that  $g:[t_{0},+\infty)\rightarrow\mathbb{R}^n$  is a continuous function and $a : [t_{0},+\infty)\rightarrow[0,+\infty)$ is a continuous function, where $t_{0}>0$.  If there exists  a constant $C\geq0$ such that
	$$\left\|g(t)+\int_{t_{0}}^{t}a(s)g(s)ds\right\|\leq C, \quad \forall t\geq t_{0},$$
	then
	$$\sup_{t\geq t_{0}}\|g(t)\|<+\infty.$$
\end{lemma}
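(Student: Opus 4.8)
The plan is to treat the integral equation as a linear first-order ODE for the running integral and to solve it explicitly with an integrating factor. Set $h(t) := \int_{t_0}^{t} a(s) g(s)\,ds$ and $G(t) := g(t) + h(t)$, so that the hypothesis reads $\|G(t)\| \le C$ for all $t \ge t_0$, while $g = G - h$. Since $a$ and $g$ are continuous, $h$ is $\mathcal{C}^1$ with $h(t_0) = 0$ and $\dot h(t) = a(t) g(t) = a(t)\bigl(G(t) - h(t)\bigr)$; hence $h$ solves the linear differential equation $\dot h(t) + a(t) h(t) = a(t) G(t)$.

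Next I would introduce the integrating factor $\mu(t) := \exp\!\bigl(\int_{t_0}^{t} a(s)\,ds\bigr)$, which is well defined, $\mathcal{C}^1$, and satisfies $\mu(t_0) = 1$, $\mu(t) \ge 1$ (because $a \ge 0$), and $\dot\mu(t) = a(t)\mu(t)$. Multiplying the ODE by $\mu(t)$ gives $\frac{d}{dt}\bigl(\mu(t) h(t)\bigr) = \mu(t) a(t) G(t) = \dot\mu(t) G(t)$, and integrating from $t_0$ to $t$ (using $h(t_0)=0$) yields $h(t) = \frac{1}{\mu(t)} \int_{t_0}^{t} \dot\mu(s) G(s)\,ds$. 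Taking norms and using $\|G(s)\| \le C$ together with $\dot\mu \ge 0$,
\[
\|h(t)\| \le \frac{C}{\mu(t)} \int_{t_0}^{t} \dot\mu(s)\,ds = \frac{C\bigl(\mu(t) - \mu(t_0)\bigr)}{\mu(t)} = C\Bigl(1 - \frac{1}{\mu(t)}\Bigr) \le C .
\]

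Finally, since $g(t) = G(t) - h(t)$, the triangle inequality gives $\|g(t)\| \le \|G(t)\| + \|h(t)\| \le 2C$ for every $t \ge t_0$, whence $\sup_{t \ge t_0}\|g(t)\| \le 2C < +\infty$. The only place that needs a moment's thought is that a direct Gr\"onwall-type estimate is not available for $\|g\|$ itself — there is no differential inequality satisfied by $g$ — so the decisive step is to solve exactly for the integral term $h$; once that is done, the inequality $\mu \ge 1$ (which is exactly where $a \ge 0$ enters) makes the bound uniform in $t$, and the finite-dimensional vector-valued setting causes no trouble since $\mu$ and $a$ are nonnegative scalars and every estimate passes through $\|\cdot\|$.
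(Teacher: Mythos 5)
Your proof is correct and complete: the paper itself does not prove this lemma but only cites it from \cite[Lemma 6]{HeHFiietal(2022)}, and your integrating-factor argument (solving exactly for $h(t)=\int_{t_0}^t a(s)g(s)\,ds$ and using $a\ge 0$ to get $\|h(t)\|\le C\bigl(1-\tfrac{1}{\mu(t)}\bigr)\le C$, hence $\|g(t)\|\le 2C$) is precisely the standard proof of this result in the cited reference. No gaps.
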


\begin{lemma}\label{lemma2.2.3}\cite[Lemma A.3]{AttouchZH2018}
	Suppose that $\delta>0$, $\phi\in L^{1}([\delta, +\infty))$ is a nonnegative and continuous function, and $\psi :[\delta, +\infty)\rightarrow(0, +\infty)$ is a nondecreasing function such that $\lim_{t\rightarrow+\infty}\psi(t)=+\infty$. Then,
	$$\lim_{t\rightarrow+\infty}\frac{1}{\psi(t)}\int_{\delta}^{t}\psi(s)\phi(s)ds=0.$$
\end{lemma}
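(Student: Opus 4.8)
The plan is to run the standard ``$\varepsilon$-splitting'' argument: use the integrability of $\phi$ to control the tail of the integral, and use the monotonicity together with the divergence of $\psi$ to annihilate the remaining finite head. Fix $\varepsilon>0$. Since $\phi\in L^{1}([\delta,+\infty))$ is nonnegative, there exists $T\geq\delta$ with $\int_{T}^{+\infty}\phi(s)\,ds<\varepsilon$. For $t\geq T$ I would write
$$\frac{1}{\psi(t)}\int_{\delta}^{t}\psi(s)\phi(s)\,ds=\frac{1}{\psi(t)}\int_{\delta}^{T}\psi(s)\phi(s)\,ds+\frac{1}{\psi(t)}\int_{T}^{t}\psi(s)\phi(s)\,ds.$$

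For the head term, note that $\psi$ nondecreasing gives $\psi(s)\leq\psi(T)$ on $[\delta,T]$, so $\int_{\delta}^{T}\psi(s)\phi(s)\,ds\leq\psi(T)\int_{\delta}^{T}\phi(s)\,ds=:M<+\infty$ is a fixed constant; since $\psi(t)\to+\infty$, there is $T'\geq T$ such that $M/\psi(t)<\varepsilon$ for all $t\geq T'$. For the tail term, monotonicity of $\psi$ yields $\psi(s)\leq\psi(t)$ for $s\in[T,t]$, hence
$$\frac{1}{\psi(t)}\int_{T}^{t}\psi(s)\phi(s)\,ds\leq\frac{1}{\psi(t)}\int_{T}^{t}\psi(t)\phi(s)\,ds=\int_{T}^{t}\phi(s)\,ds\leq\int_{T}^{+\infty}\phi(s)\,ds<\varepsilon.$$
Therefore, for every $t\geq T'$ the quantity $\frac{1}{\psi(t)}\int_{\delta}^{t}\psi(s)\phi(s)\,ds$ is nonnegative and strictly less than $2\varepsilon$, and since $\varepsilon>0$ was arbitrary the stated limit equals $0$.

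There is no real obstacle here; the argument is elementary. The only points that deserve a word of care are that $\int_{\delta}^{T}\psi(s)\phi(s)\,ds$ is finite, which follows from boundedness of $\psi$ on $[\delta,T]$ (again by monotonicity) together with $\phi\in L^{1}$, and that $\psi(t)>0$ for all $t$ so that every division is legitimate, while $\psi(t)\to+\infty$ is precisely what forces the head term to vanish. Since this is quoted verbatim from \cite[Lemma A.3]{AttouchZH2018}, one may alternatively just cite it; the sketch above is the proof one would supply if it were not already available in the literature.
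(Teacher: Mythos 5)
Your proof is correct: the $\varepsilon$-splitting argument, with the tail controlled by integrability of $\phi$ and the head annihilated by $\psi(t)\to+\infty$, is exactly the standard proof of this classical fact. The paper itself supplies no proof but simply cites \cite[Lemma A.3]{AttouchZH2018}, and your argument is the one given there, so there is nothing to add.
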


\section{Convergence properties of the solution trajectories}

For the beginning we will prove existence and uniqueness of a strong global solution of the dynamical system \eqref{z2}. 

\begin{theorem}\label{ztt4.1.1}
	Suppose that $\nabla f$ is $L$-Lipschitz continuous over $\mathcal{X}$ with $L>0$ and $\epsilon(t)\in L_{loc}^{1}([t_0, +\infty))$, where $ L_{loc}^{1}([t_0, +\infty))$ denotes the family of local integrable functions on $[t_0,+\infty)$. Then, for any given initial condition $(x(t_0), \lambda(t_0), \dot{x}(t_0))=(x_0, \lambda_0, u_0)\in \mathcal{X}\times\mathcal{Y}\times\mathcal{X}$, the dynamical system \eqref{z2} has a unique strong global solution.
\end{theorem}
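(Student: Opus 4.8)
The plan is to recast the second-order plus first-order system \eqref{z2} as a first-order system in an augmented phase space and then invoke the Cauchy--Lipschitz (Picard--Lindelöf) theorem for absolutely continuous solutions of ODEs with Carathéodory right-hand side. Concretely, I would introduce the new variable $u(t)=\dot{x}(t)$ and set $Z(t)=(x(t),u(t),\lambda(t))\in\mathcal{X}\times\mathcal{X}\times\mathcal{Y}$. The system \eqref{z2} is then equivalent to $\dot{Z}(t)=F(t,Z(t))$ with
\[
F(t,x,u,\lambda)=\Bigl(u,\ -\tfrac{\alpha}{t}u-\nabla f(x)-A^T\lambda-\rho A^T(Ax-b)-\epsilon(t)x,\ t A x+\tfrac{t^2}{\alpha-1}Au-tb\Bigr),
\]
and the initial condition $Z(t_0)=(x_0,u_0,\lambda_0)$. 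A strong global solution of \eqref{z2} corresponds exactly to a solution $Z$ of this first-order system that is absolutely continuous on every compact subinterval of $[t_0,+\infty)$, with $x$ of class $\mathcal{C}^1$ and $\dot{x}$ absolutely continuous; note $\lambda$ is then $\mathcal{C}^1$ and $x$ is $\mathcal{C}^1$ with $\dot x=u$ locally absolutely continuous, which is the natural regularity here since $\epsilon$ is only locally integrable.

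Next I would verify the Carathéodory-type hypotheses for $F$ on $[t_0,+\infty)\times(\mathcal{X}\times\mathcal{X}\times\mathcal{Y})$: (i) for a.e.\ fixed $t$, $F(t,\cdot)$ is continuous (indeed Lipschitz) in $(x,u,\lambda)$; (ii) for each fixed $(x,u,\lambda)$, $t\mapsto F(t,x,u,\lambda)$ is measurable, which holds because $\epsilon$ and the coefficients $\tfrac{\alpha}{t}$, $t$, $\tfrac{t^2}{\alpha-1}$ are locally integrable / continuous on $[t_0,+\infty)$ (using $t_0>0$ so $1/t$ is bounded on compacts); and (iii) for every compact interval $[t_0,T]$ there is a locally integrable bound $\ell_T(t)$ with $\|F(t,Z_1)-F(t,Z_2)\|\le \ell_T(t)\|Z_1-Z_2\|$. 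For (iii) the Lipschitz constant of $F(t,\cdot)$ can be taken of the form $c_1+\tfrac{c_2}{t}+c_3 t+c_4 t^2+\epsilon(t)$, using $L$-Lipschitz continuity of $\nabla f$, boundedness of $\|A\|$, and $\epsilon\in L^1_{loc}$; this is integrable on $[t_0,T]$. The Cauchy--Lipschitz theorem in the Carathéodory setting (e.g.\ as used in \cite{BNguyen2022, HeHFiietal(2022)}) then yields a unique maximal solution on some interval $[t_0,T_{\max})$.

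The remaining point, and the one requiring a little care rather than being fully routine, is global existence: showing $T_{\max}=+\infty$. The standard device is the blow-up alternative: if $T_{\max}<+\infty$ then $\|Z(t)\|\to+\infty$ as $t\uparrow T_{\max}$. To exclude this I would derive an a priori bound via a Grönwall argument. From the differential system one estimates $\tfrac{d}{dt}\tfrac12\|Z(t)\|^2=\langle Z(t),\dot Z(t)\rangle=\langle Z(t),F(t,Z(t))\rangle$, and using the affine-in-$Z$ structure of $F$ together with the local Lipschitz/growth bound above one gets $\tfrac{d}{dt}\|Z(t)\|^2\le a(t)\|Z(t)\|^2+b(t)$ with $a,b\in L^1_{loc}$ on $[t_0,T_{\max}]$ (here the constant terms $-\rho A^Tb$, $-tb$ are absorbed into $b(t)$). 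Grönwall's inequality then bounds $\|Z(t)\|$ on $[t_0,T_{\max})$, contradicting blow-up; hence the solution is global. Uniqueness on $[t_0,+\infty)$ follows by patching the local uniqueness statements. I expect the only genuinely delicate bookkeeping to be tracking the $t$- and $t^2$-weighted coefficients in the Lipschitz/growth estimates and confirming they remain integrable on each compact subinterval — everything else is a direct application of the Carathéodory existence-uniqueness theory.
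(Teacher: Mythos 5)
Your proposal is correct and follows essentially the same route as the paper: rewrite \eqref{z2} as a first-order system in $Z=(x,\lambda,u)$ with $u=\dot x$, establish a Lipschitz estimate $\|G(t,Z)-G(t,\bar Z)\|\le K(t)\|Z-\bar Z\|$ and a linear growth estimate $\|G(t,Z)\|\le S(t)(1+\|Z\|)$ with $K,S\in L^1_{loc}([t_0,+\infty))$, and conclude by the Cauchy--Lipschitz/Carath\'eodory theory. The only cosmetic difference is that you carry out the global-continuation step by hand via the blow-up alternative and Gr\"onwall's inequality, whereas the paper cites \cite[Proposition 6.2.1]{HarauxAunique(1991)} (whose proof is exactly that argument) together with \cite[Corollary A.2]{BrezisHilber1973} to upgrade the $W^{1,1}_{loc}$ solution to a strong one.
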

\begin{proof}
Taking $u(t)=\dot{x}(t)$ and $Z(t)=(x(t), \lambda(t), u(t))$, we can rewrite \eqref{z2} as 
\begin{eqnarray}\label{dudu18}
	\begin{cases}
		\dot{Z}(t)+G(t, Z(t))=0,\\
		Z(t_0)=(x_0, \lambda_0, u_0),
	\end{cases}
\end{eqnarray}
where
\begin{eqnarray}\label{dudu15}
	G(t, Z(t))=\left(\begin{aligned}&-u\\
		&-t(A(x+\dfrac{t}{\alpha-1}u)-b)\\
		&\frac{\alpha}{t}u+\nabla f(x)+A^T\lambda+\rho A^T(Ax-b)+\epsilon(t)x\end{aligned} \right).
\end{eqnarray}
Since $\nabla f$ is $L$-Lipschitz continuous, it follows from \eqref{dudu15} that for any $Z, \bar{Z}\in \mathcal{X}\times\mathcal{Y}\times\mathcal{X}$, 
\begin{eqnarray*}
	\|G(t, Z)-G(t, \bar{Z})\|
	&\leq&(1+\frac{\alpha}{t}+\dfrac{t^2}{\alpha-1} \|A\|)\|u-\bar{u}\|
	+\|A^T\|\|\lambda-\bar{\lambda}\|\\
	&&+t\|A\|\|x-\bar{x}\|
	+\|\nabla f(x)-\nabla f(\bar{x})\|\\
	&&+\rho\|A^TA\|\|x-\bar{x}\|
	+\epsilon(t)\|x-\bar{x}\|\\
	&\leq&(C_1+\frac{\alpha}{t}+\dfrac{t^2}{\alpha-1}\|A\|+t\|A\|+\epsilon(t))\|Z-\bar{Z}\|,
\end{eqnarray*}
where $C_1=\max\{1, \|A^T\|, L+\rho\|A^TA\|\}$.
Taking $K(t)=C_1+\frac{\alpha}{t}+\dfrac{t^2}{\alpha-1}\|A\|+t\|A\|+\epsilon(t)$, we have
\begin{eqnarray*}
	\|G(t, Z)-G(t, \bar{Z})\|\leq K(t)\|Z-\bar{Z}\|,  \qquad \forall Z, \bar{Z}\in \mathcal{X}\times\mathcal{Y}\times\mathcal{X},
\end{eqnarray*}
where $K(t)\in L_{loc}^{1}([t_0, +\infty))$.
Because $\nabla f$ is $L$-Lipschitz continuous, we have
\begin{eqnarray*}
	\|\nabla f(x)-\nabla f(0)\|\leq L\|x\|\leq L\|Z\|,
\end{eqnarray*}
which together with \eqref{dudu15} implies
\begin{eqnarray*}
\|G(t, Z)\|
&\leq&(1+\frac{\alpha}{t}+\dfrac{t^2}{\alpha-1}\|A\|)\|Z\|
	+\|A^T\|\|Z\|+(t\|A\|+\rho\|A^TA\|+\epsilon(t))\|Z\|\\
	&&+t\|b\|+L\|Z\|+\|\nabla f(0)\|+\rho\|A^Tb\|\\
	&\leq&(1+\frac{\alpha}{t}+\dfrac{t^2}{\alpha-1}\|A\|)(1+\|Z\|)+\|A^T\|(1+\|Z\|)\\
	&&+(t\|A\|+\rho\|A^TA\|+\epsilon(t))(1+\|Z\|)\\
	&&+C_2(1+\|Z\|)+t\|b\|(1+\|Z\|)\\
	&\leq&(C_3+\frac{\alpha}{t}+\dfrac{t^2}{\alpha-1}\|A\|+t\|b\|)(1+\|Z\|)\\
	&&+(C_3+t\|A\|+\epsilon(t))(1+\|Z\|)+2C_3(1+\|Z\|),
\end{eqnarray*}
where $C_2=\max\{L, \|\nabla f(0)\|+\rho\|A^T b\|\}$ and $C_3=\max\{1, \rho\|A^TA\|, C_2, \|A^T\|\}$. Take
$$S(t)=4C_3+\frac{\alpha}{t}+\dfrac{t^2}{\alpha-1}\|A\|+t\|b\|+t\|A\|+\epsilon(t).$$ 
Then, we have
\begin{eqnarray*}
	\|G(t, Z)\|&\leq&S(t)(1+\|Z\|), \quad \forall Z\in\mathcal{X}\times\mathcal{Y}\times\mathcal{X},
\end{eqnarray*}
where $S(t)\in L_{loc}^{1}([t_0, +\infty))$. By \cite[Proposition 6.2.1]{HarauxAunique(1991)} and \cite[Theorem 5]{AttouchADMM(2022)}, the dynamical system \eqref{dudu18} has a unique global solution $Z(\cdot)\in W_{loc}^{1,1}([t_0, +\infty); \mathcal{X}\times\mathcal{Y}\times\mathcal{X})$ satisfying the initial condition $Z(t_0)=(x_0, \lambda_0, u_0)$, which together with \cite[Corollary A.2]{BrezisHilber1973} implies $Z(\cdot)$ is a strong global solution to \eqref{dudu18}. Thus, the dynamical system \eqref{z2} has a unique strong global solution $(x(\cdot), \lambda(\cdot))$.
\end{proof}

\subsection{Minimization properties of the trajectories}

In this subsection, depending on the speed of convergence of $\epsilon(t)$ to zero, we analyze the convergence properties of the trajectories generated by the dynamical system \eqref{z2}. The analysis is based on the approaches developed in \cite{AttouchZH2018, HeHFiietal(2022)}.

\begin{theorem}\label{ztt2.2}
Let $\epsilon: [t_0,+\infty)\rightarrow [0,+\infty)$ be a $\mathcal{C}^1$ nonincreasing function satisfying $\int_{t_{0}}^{+\infty}t\epsilon(t)dt<+\infty$ and let $(x,\lambda): [t_{0},+\infty)\rightarrow \mathcal{X}\times\mathcal{Y}$ be a solution of \eqref{z2}. For any $(x^*,\lambda^*)\in\Omega$, the following conclusions hold:
\begin{itemize}
\item[(i)] If $\alpha\geq3$, then the trajectory $(x(t),\lambda(t))_{t\geq t_{0}}$ is bounded and it holds
$$L_{\rho}(x(t),\lambda^*)-L_{\rho}(x^*,\lambda^*)=\mathcal{O}\left(\frac{1}{t^2}\right), \quad |f(x(t))-f(x^*)|=\mathcal{O}\left(\frac{1}{t^2}\right),$$
$$\|Ax(t)-b\|=\mathcal{O}\left(\frac{1}{t^2}\right), \quad
\|\dot{x}(t)\|=\mathcal{O}\left(\frac{1}{t}\right), \quad \|\nabla f(x(t))-\nabla f(x^*)\|=\mathcal{O}\left(\frac{1}{t}\right),$$
$$\rho\int_{t_{0}}^{+\infty}t\|Ax(t)-b\|^2dt<+\infty, \quad \int_{t_{0}}^{+\infty}t \epsilon(t)\|x(t)\|^2dt<+\infty.$$
\item[(ii)] If $\alpha>3$, it also holds
$$\int_{t_{0}}^{+\infty}t(L_{\rho}(x(t),\lambda^*)-L_{\rho}(x^*,\lambda^*))dt<+\infty,$$
$$\int_{t_{0}}^{+\infty}t\|\nabla f(x(t))-\nabla f(x^*)\|^2dt<+\infty.$$
\end{itemize}
\end{theorem}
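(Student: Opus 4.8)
The plan is to follow the classical Lyapunov approach used for Tikhonov-regularized inertial systems in \cite{AttouchZH2018} and for the un-regularized primal-dual system in \cite{HeHFiietal(2022)}, carefully tracking the extra Tikhonov term $\epsilon(t)x(t)$. First I would introduce the energy function
\begin{eqnarray*}
	\mathcal{E}(t) = t^2\big(\mathcal{L}_\rho(x(t),\lambda^*) - \mathcal{L}_\rho(x^*,\lambda^*)\big)
	+ \tfrac12\big\|(\alpha-1)(x(t)-x^*) + t\dot{x}(t)\big\|^2
	+ \tfrac{(\alpha-1)^2}{2}\|x(t)-x^*\|^2 + \tfrac12\|\lambda(t)-\lambda^*\|^2,
\end{eqnarray*}
(or the appropriate variant matching the second equation of \eqref{z2}, with the dual term possibly scaled). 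I would compute $\dot{\mathcal{E}}(t)$ using the dynamics \eqref{z2}, the convexity inequality $\mathcal{L}_\rho(x,\lambda^*) - \mathcal{L}_\rho(x^*,\lambda^*) \geq \langle \nabla_x\mathcal{L}_\rho(x^*,\lambda^*), x-x^*\rangle = 0$ together with the gradient-inequality refinement, and crucially the saddle-point characterization \eqref{zc3}. The cross terms coming from the $\lambda$-equation and the penalty term $\rho A^T(Ax-b)$ should cancel in the same way as in \cite{HeHFiietal(2022)}; the genuinely new contribution is the term $-t^2\epsilon(t)\langle x(t), (\alpha-1)(x(t)-x^*)+t\dot x(t)\rangle$ (and similar), which I would bound using $\langle x, x-x^*\rangle \geq \frac12\|x\|^2 - \frac12\|x^*\|^2$ and Young's inequality, producing a remainder controlled by $t\epsilon(t)\|x(t)\|^2$ plus a term absorbable into $\mathcal{E}(t)$ itself (a Gronwall-type step).

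Next, I would establish that $\alpha\geq 3$ makes the ``good'' dissipative terms (the $(\alpha-3)t(\mathcal{L}_\rho(x(t),\lambda^*)-\mathcal{L}_\rho(x^*,\lambda^*))$ term, the $\rho t\|Ax(t)-b\|^2$ term, the $t\epsilon(t)\|x(t)\|^2$ term, and a $t\|\dot x(t)\|^2$-type term) appear with the correct sign, so that
$$\dot{\mathcal{E}}(t) \leq -(\text{nonnegative dissipation}) + t\epsilon(t)\cdot(\text{bounded-by-}\mathcal{E}\text{ expression}).$$
Integrating and invoking the hypothesis $\int_{t_0}^{+\infty} t\epsilon(t)\,dt < +\infty$ together with Gronwall's lemma yields that $\mathcal{E}(t)$ is bounded; this immediately gives the $\mathcal{O}(1/t^2)$ rate on $\mathcal{L}_\rho(x(t),\lambda^*)-\mathcal{L}_\rho(x^*,\lambda^*)$, the $\mathcal{O}(1/t)$ bound on $\|(\alpha-1)(x(t)-x^*)+t\dot x(t)\|$, and, via Lemma \ref{lemma2.1.1}, boundedness of $(x(t))$. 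Boundedness of $\lambda(t)$ follows from the $\|\lambda(t)-\lambda^*\|^2$ term in $\mathcal{E}$. From boundedness of the trajectory, the last two integrability claims in (i) follow by integrating the dissipation inequality, and $\|\dot x(t)\| = \mathcal{O}(1/t)$ follows from the bound on $\|(\alpha-1)(x(t)-x^*)+t\dot x(t)\|$ combined with boundedness of $x(t)-x^*$.

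For the remaining pointwise estimates, I would recover $|f(x(t))-f(x^*)| = \mathcal{O}(1/t^2)$ from the $\mathcal{L}_\rho$-rate and the feasibility rate (using $\mathcal{L}_\rho(x,\lambda^*)-\mathcal{L}_\rho(x^*,\lambda^*) = f(x)-f(x^*) + \langle\lambda^*, Ax-b\rangle + \tfrac\rho2\|Ax-b\|^2$ and Cauchy--Schwarz), and I would obtain $\|Ax(t)-b\| = \mathcal{O}(1/t^2)$ either directly from the energy (if the functional is designed to include a $t^4\|Ax-b\|^2$ contribution, as in \cite{HeHFiietal(2022)}) or by a secondary argument on the $\lambda$-equation exploiting $\|\dot\lambda(t)\|$-control. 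The bound $\|\nabla f(x(t))-\nabla f(x^*)\| = \mathcal{O}(1/t)$ comes from $L$-Lipschitz continuity of $\nabla f$ once one shows $\|x(t)-x^*\|\to$ suitably, or more precisely from a descent estimate on $f$ plus the $\mathcal{O}(1/t^2)$ objective-residual rate via the standard inequality $\tfrac{1}{2L}\|\nabla f(x)-\nabla f(x^*)\|^2 \leq f(x)-f(x^*)-\langle\nabla f(x^*),x-x^*\rangle$; I would need to be slightly careful here because $\nabla f(x^*) = -A^T\lambda^* \neq 0$ in general, so this requires pairing with the feasibility estimate as well. Finally, part (ii): when $\alpha > 3$ the coefficient $(\alpha-3)$ in front of $t(\mathcal{L}_\rho(x(t),\lambda^*)-\mathcal{L}_\rho(x^*,\lambda^*))$ is strictly positive, so integrating the (now strict) dissipation inequality over $[t_0,+\infty)$ gives the two stated integrability results, where for the $\int t\|\nabla f(x(t))-\nabla f(x^*)\|^2\,dt$ bound I would again use the quadratic gradient inequality to dominate $\|\nabla f(x(t))-\nabla f(x^*)\|^2$ by a constant times the Lagrangian residual plus a feasibility-violation term already known to be $t$-integrable.

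\medskip

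\noindent\textbf{Main obstacle.} The delicate point is the handling of the Tikhonov cross-term in $\dot{\mathcal{E}}(t)$: unlike in the unconstrained case, the term $\epsilon(t)\langle x(t), \cdot\rangle$ interacts with the primal-dual coupling, and one must verify that after completing squares the leftover is genuinely controlled by $t\epsilon(t)\|x(t)\|^2$ (which the energy itself will bound in integral form) plus a piece of the form $t\epsilon(t)\mathcal{E}(t)$ amenable to Gronwall — with no stray term of order $t^2\epsilon(t)\|x(t)\|\|\dot x(t)\|$ that fails to be integrable. Getting the constants in the Lyapunov function (the weights on $\|x(t)-x^*\|^2$ and $\|\lambda(t)-\lambda^*\|^2$, and whether a $t^4\|Ax(t)-b\|^2$ term is needed) exactly right so that all unwanted terms cancel is where the real work lies; everything downstream is bookkeeping.
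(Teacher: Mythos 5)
Your overall strategy (Lyapunov function, integrate a dissipation inequality, then extract rates) matches the paper's, but the two steps you yourself flag as ``where the real work lies'' are exactly the ones the paper resolves by specific devices that your sketch does not supply, so as written there is a genuine gap. First, the Tikhonov cross-term. The paper's energy is not the one you propose: it is $\mathcal{E}(t)=t^2\bigl(\mathcal{L}_{\rho}(x(t),\lambda^*)-\mathcal{L}_{\rho}(x^*,\lambda^*)+\frac{\epsilon(t)}{2}\|x(t)\|^2\bigr)+\frac{1}{2}\|(\alpha-1)(x(t)-x^*)+t\dot{x}(t)\|^2+\frac{\alpha-1}{2}\|\lambda(t)-\lambda^*\|^2$. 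Embedding $\frac{t^2\epsilon(t)}{2}\|x(t)\|^2$ inside the energy is precisely what kills your feared stray term: differentiating it produces $+t^2\epsilon(t)\langle x(t),\dot{x}(t)\rangle$, which cancels \emph{exactly} the $-t^2\epsilon(t)\langle x(t),\dot{x}(t)\rangle$ coming from pairing $(\alpha-1)(x-x^*)+t\dot{x}$ with the $-t\epsilon(t)x$ term of the dynamics. The remaining $\epsilon$-terms are then absorbed by using the $\epsilon(t)$-strong convexity of $f+\frac{\epsilon(t)}{2}\|\cdot\|^2$ (rather than plain convexity of $f$), together with $\dot{\epsilon}\le 0$ and $\alpha\ge 3$, leaving only the integrable source $\frac{(\alpha-1)\|x^*\|^2}{2}\,t\epsilon(t)$. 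No Gronwall step is needed at all; direct integration suffices. Your variant (no $\epsilon$-term in the energy, Young's inequality plus Gronwall with weight $t\epsilon(t)$) might be made to work, but you have not carried it out, and you would have to verify that $t^2\|\dot x\|^2$ is really dominated by your $\mathcal{E}$ before $\|\dot x\|=\mathcal{O}(1/t)$ is known.

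Second, the feasibility rate $\|Ax(t)-b\|=\mathcal{O}(1/t^2)$. Your first suggested route (a $t^4\|Ax-b\|^2$ contribution in the energy) is not available here -- the energy contains no such term, and the dissipation only yields $\rho\int t\|Ax(t)-b\|^2dt<+\infty$, which is far from a pointwise $1/t^2$ rate. The paper's actual argument is the second route you gesture at, but made precise: integrate $\dot{\lambda}(t)=t\bigl(A(x(t)+\frac{t}{\alpha-1}\dot{x}(t))-b\bigr)$ by parts to obtain $\lambda(t)-\lambda(t_0)=\frac{t^2(Ax(t)-b)}{\alpha-1}-\frac{t_0^2(Ax(t_0)-b)}{\alpha-1}+\frac{\alpha-3}{\alpha-1}\int_{t_0}^{t}s(Ax(s)-b)\,ds$, then use the already-established boundedness of $(\lambda(t))_{t\ge t_0}$ and Lemma \ref{lemma2.1} applied to $g(t)=\frac{t^2(Ax(t)-b)}{\alpha-1}$, $a(t)=\frac{\alpha-3}{t}$. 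The key input is boundedness of $\lambda(t)$ itself, not control of $\|\dot{\lambda}(t)\|$. The rest of your outline (the co-coercivity inequality $\frac{1}{2L}\|\nabla f(x)-\nabla f(x^*)\|^2\le f(x)-f(x^*)-\langle\nabla f(x^*),x-x^*\rangle$ paired with $\nabla f(x^*)=-A^T\lambda^*$, deducing $|f(x(t))-f(x^*)|$ from the Lagrangian and feasibility rates, and part (ii) from the $(\alpha-3)$ coefficient) agrees with the paper and is fine.
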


\begin{proof}
Define  $\mathcal{E}:[t_0,+\infty)\to [0,+\infty)$ by
\begin{eqnarray}\label{z11}
	\mathcal{E}(t)&=&t^2(\mathcal{L}_{\rho}(x(t),\lambda^*)-\mathcal{L}_{\rho}(x^*,\lambda^*)+\frac{\epsilon(t)}{2}\|x(t)\|^2)\nonumber\\
	&&+\frac{1}{2}\|(\alpha-1)(x(t)-x^*)+t\dot{x}(t)\|^2+\frac{\alpha-1}{2}\|\lambda(t)-\lambda^*\|^2.
\end{eqnarray}
It follows that
\begin{eqnarray*}
	\dot{\mathcal{E}}(t)&=&2t(\mathcal{L}_{\rho}(x(t),\lambda^*)-\mathcal{L}_{\rho}(x^*,\lambda^*))
	+t^2\langle \nabla_{x}\mathcal{L}_{\rho}(x(t),\lambda^*),\dot{x}(t)\rangle
	+t^2\epsilon(t)\langle x(t), \dot{x}(t)\rangle\nonumber\\
	&&+(t\epsilon(t)+\frac{t^2}{2}\dot{\epsilon}(t))\|x(t)\|^2
	+\langle (\alpha-1)(x(t)-x^*)+t\dot{x}(t), \alpha\dot{x}(t)+t\ddot{x}(t)\rangle\\
	&&+(\alpha-1)\langle \lambda(t)-\lambda^*,\dot{\lambda}(t)\rangle.
\end{eqnarray*}
By using \eqref{z2}, we have
\begin{eqnarray*}
	&&\langle (\alpha-1)(x(t)-x^*)+t\dot{x}(t), \alpha\dot{x}(t)+t\ddot{x}(t)\rangle\\
	&&\quad=\langle(\alpha-1)(x(t)-x^*)+t\dot{x}(t), 
	-t\nabla_{x}\mathcal{L}_{\rho}(x(t),\lambda(t))
	-t\epsilon(t)x(t)\rangle\\
	&&\quad=-(\alpha-1)t\langle x(t)-x^*, \nabla_{x}\mathcal{L}_{\rho}(x(t),\lambda^*)\rangle
	-t^2\langle\dot{x}(t), \nabla_{x}\mathcal{L}_{\rho}(x(t),\lambda^*)\rangle\\
	&&\qquad-t\langle(\alpha-1)(x(t)-x^*)+t\dot{x}(t), A^T(\lambda(t)-\lambda^*)\rangle\\
	&&\qquad-(\alpha-1)t\epsilon(t)\langle x(t)-x^*, x(t)\rangle-t^2\epsilon(t)\langle x(t), \dot{x}(t)\rangle
\end{eqnarray*}
and
\begin{eqnarray*}
	\langle \lambda(t)-\lambda^*, \dot{\lambda}(t)\rangle=t\langle A^T(\lambda(t)-\lambda^*), x(t)-x^*+\dfrac{t}{\alpha-1}\dot{x}(t)\rangle.
\end{eqnarray*}
Then, we obtain
\begin{eqnarray}\label{zccz2}
	\dot{\mathcal{E}}(t)&=&2t(\mathcal{L}_{\rho}(x(t),\lambda^*)-\mathcal{L}_{\rho}(x^*,\lambda^*))
	+(t\epsilon(t)+\frac{t^2}{2}\dot{\epsilon}(t))\|x(t)\|^2\\
	&&-(\alpha-1)t\langle x(t)-x^*,\nabla_{x}\mathcal{L}_{\rho}(x(t),\lambda^*)\rangle
	-(\alpha-1)t\epsilon(t)\langle x(t)-x^*, x(t)\rangle.\nonumber
\end{eqnarray}
Since $f(x)+\frac{\epsilon(t)}{2}\|x\|^2$ is a $\epsilon(t)$\mbox{-}strongly convex function,  we have
\begin{eqnarray*}
	f(x^*)+\frac{\epsilon(t)}{2}\|x^*\|^2-f(x(t))-\frac{\epsilon(t)}{2}\|x(t)\|^2
	&\geq&\langle \nabla f(x(t))+\epsilon(t)x(t), x^*-x(t)\rangle\\
	&&+\frac{\epsilon(t)}{2}\|x(t)-x^*\|^2.
\end{eqnarray*}
This implies 
\begin{eqnarray}\label{zccz1}
	&&\langle\nabla_{x}\mathcal{L}_{\rho}(x(t),\lambda^*), x^*-x(t)\rangle\nonumber\\
	&&\quad=\langle \nabla f(x(t))+A^T\lambda^*+\rho A^T(Ax(t)-b),x^*-x(t)\rangle\nonumber\\
	&&\quad=\langle\nabla f(x(t)), x^*-x(t)\rangle-\langle \lambda^*,Ax(t)-b\rangle
	-\rho\|Ax(t)-b\|^2\nonumber\\
	&&\quad\leq\mathcal{L}_{\rho}(x^*,\lambda^*)-\mathcal{L}_{\rho}(x(t),\lambda^*)+\epsilon(t)\langle x(t),x(t)-x^*\rangle\nonumber\\
	&&\qquad+\frac{\epsilon(t)}{2}(\|x^*\|^2-\|x(t)\|^2-\|x(t)-x^*\|^2)-\frac{\rho}{2}\|Ax(t)-b\|^2,
\end{eqnarray}
which together with \eqref{zccz2} implies
\begin{eqnarray*}
	\dot{\mathcal{E}}(t)&\leq&(3-\alpha)t(\mathcal{L}_{\rho}(x(t),\lambda^*)-\mathcal{L}_{\rho}(x^*,\lambda^*))
	+\left[(1-\frac{\alpha-1}{2})t\epsilon(t)+\frac{t^2}{2}\dot{\epsilon}(t)\right]\|x(t)\|^2\nonumber\\
	&&+\frac{(\alpha-1)t\epsilon(t)}{2}(\|x^*\|^2-\|x(t)-x^*\|^2)-\frac{\rho(\alpha-1) t}{2}\|Ax(t)-b\|^2.
\end{eqnarray*}
Since $\alpha\geq3$ and  $\epsilon(t)$ is nonnegative and nonincreasing, we have
\begin{eqnarray*}
	&&\dot{\mathcal{E}}(t)-(3-\alpha)t(\mathcal{L}_{\rho}(x(t),\lambda^*)-\mathcal{L}_{\rho}(x^*,\lambda^*))
	+\frac{(\alpha-1)t\epsilon(t)}{2}\|x(t)-x^*\|^2\\
	&&\quad+\frac{\rho(\alpha-1) t}{2}\|Ax(t)-b\|^2
	\leq\frac{(\alpha-1)\|x^*\|^2}{2}t\epsilon(t).
\end{eqnarray*}

(i) Integrating the last inequality over  $[t_{0},t]$, we have
\begin{eqnarray}\label{ztt:aat}
	&&\mathcal{E}(t)+(\alpha-3)\int_{t_{0}}^{t}s(\mathcal{L}_{\rho}(x(s),\lambda^*)-\mathcal{L}_{\rho}(x^*,\lambda^*))ds+\frac{(\alpha-1)}{2}\int_{t_{0}}^{t}s\epsilon(s)\|x(s)-x^*\|^2ds\nonumber\\
	&&\quad+\frac{\rho(\alpha-1)}{2}\int_{t_{0}}^{t}s\|Ax(s)-b\|^2ds
	\leq\mathcal{E}(t_{0})+\frac{(\alpha-1)\|x^*\|^2}{2}\int_{t_{0}}^{t}s\epsilon(s)ds.
\end{eqnarray}
Since $\int_{t_{0}}^{+\infty}t\epsilon(t)dt<+\infty$, $\mathcal{E}(t)\geq0$ and  $\mathcal{L}_{\rho}(x(t),\lambda^*)-\mathcal{L}_{\rho}(x^*,\lambda^*)\geq0$, it follows from \eqref{ztt:aat} that $(\mathcal{E}(t))_{t\geq t_{0}}$ is bounded,
\begin{eqnarray*}\label{z14}
	\rho\int_{t_{0}}^{+\infty}t\|Ax(t)-b\|^2dt<+\infty
\end{eqnarray*}
and 
\begin{eqnarray*}
	\int_{t_{0}}^{+\infty}t\epsilon(t)\|x(t)-x^*\|^2dt<+\infty.
\end{eqnarray*}
Since $\int_{t_{0}}^{+\infty}t\epsilon(t)dt<+\infty$ and
$$t\epsilon(t)\|x(t)\|^2\leq 2t\epsilon(t)\|x(t)-x^*\|^2+2t\epsilon(t)\|x^*\|^2,$$
we obtain
\begin{eqnarray*}
	\int_{t_{0}}^{+\infty}t\epsilon(t)\|x(t)\|^2dt<+\infty.
\end{eqnarray*}
Further, by using \eqref{z11} and the boundedness of  $(\mathcal{E}(t))_{t\geq t_{0}}$, we have
\begin{eqnarray}\label{z17}
	\mathcal{L}_{\rho}(x(t),\lambda^*)-\mathcal{L}_{\rho}(x^*,\lambda^*)=\mathcal{O}\left(\frac{1}{t^2}\right),
\end{eqnarray}
the trajectory $(\lambda(t))_{t\geq t_0}$ is bounded, and
$$\left\{\frac{1}{2}\|x(t)-x^*+ \dfrac{t}{\alpha-1}\dot{x}(t)\|^2\right\}_{t\geq t_0}$$ is bounded.
By using Lemma \ref{lemma2.1.1}, we can obtain that $(x(t))_{t\geq t_0}$ is bounded. As a consequence,
$$\|\dot{x}(t)\|=\mathcal{O}\left(\frac{1}{t}\right).$$
Since $f$ is convex and $\nabla f$ is $L$-Lipschitz continuous, we have 
\begin{eqnarray*}
	f(x(t))-f(x^*)-\langle\nabla f(x^*), x(t)-x^*\rangle
	\geq\dfrac{1}{2L}\|\nabla f(x(t))-\nabla f(x^*)\|^2,
\end{eqnarray*}
which together with \eqref{zc3} implies
\begin{eqnarray}\label{ztt:sst}
	\mathcal{L}_{\rho}(x(t),\lambda^*)-\mathcal{L}_{\rho}(x^*,\lambda^*)
	&\geq& f(x(t))-f(x^*)+\langle\lambda^*, Ax(t)-b\rangle\nonumber\\
	&\geq&\langle\nabla f(x^*), x(t)-x^*\rangle+\langle A^T\lambda^*, x(t)-x^*\rangle\nonumber\\
	&&+\dfrac{1}{2L}\|\nabla f(x(t))-\nabla f(x^*)\|^2\nonumber\\
	&=&\dfrac{1}{2L}\|\nabla f(x(t))-\nabla f(x^*)\|^2.
\end{eqnarray}
This together with \eqref{z17} yields
\begin{eqnarray*}
	\|\nabla f(x(t))-\nabla f(x^*)\|=\mathcal{O}\left(\frac{1}{t}\right).	
\end{eqnarray*}
By using \eqref{z2}, we have
\begin{eqnarray*}
	\lambda(t)-\lambda(t_{0})
	&=&\int_{t_{0}}^{t}\dot{\lambda}(s)ds\nonumber\\
	&=&\int_{t_{0}}^{t}s\left[A\left(x(s)+\frac{s}{\alpha-1}\dot{x}(s)\right)-b\right]ds\nonumber\\
	&=&\int_{t_{0}}^{t}s(Ax(s)-b)ds
	+\int_{t_{0}}^{t}\frac{s^2}{\alpha-1}d(Ax(s)-b)\\
	&=&\frac{t^2(Ax(t)-b)}{\alpha-1}-\frac{t_{0}^2(Ax(t_{0})-b)}{\alpha-1}
	+\int_{t_{0}}^{t}\frac{(\alpha-3)}{\alpha-1}s(Ax(s)-b)ds,
\end{eqnarray*}
which together with the boundedness of $(\lambda(t))_{t\geq t_{0}}$ implies
\begin{eqnarray*}
	\left\|\frac{t^2(Ax(t)-b)}{\alpha-1}+\int_{t_{0}}^{t}\frac{(\alpha-3)}{s}\frac{s^2}{\alpha-1}(Ax(s)-b)ds\right\|
	\leq C_{4}, \quad \forall t\geq t_{0},
\end{eqnarray*}
where $C_{4}$ is a nonnegative constant.
Applying Lemma \ref{lemma2.1} with $g(t)=\frac{t^2(Ax(t)-b)}{\alpha-1}$ and $a(t)=\frac{(\alpha-3)}{t}$ to the last  inequality, we obtain
$$\sup_{t\geq t_{0}}\left\|\frac{t^2(Ax(t)-b)}{\alpha-1}\right\|<+\infty,$$
which yields
\begin{eqnarray}\label{z16}
	\|Ax(t)-b\|=\mathcal{O}\left(\frac{1}{t^2}\right).
\end{eqnarray}
Since
\begin{eqnarray*}
	\mathcal{L}_{\rho}(x(t),\lambda^*)-\mathcal{L}_{\rho}(x^*,\lambda^*)=f(x(t))-f(x^*)+\langle \lambda^*,Ax(t)-b\rangle+\frac{\rho}{2}\|Ax(t)-b\|^2,
\end{eqnarray*}
it follows from \eqref{z17} and \eqref{z16}  that
$$|f(x(t))-f(x^*)|=\mathcal{O}\left(\frac{1}{t^2}\right).$$

(ii) If $\alpha>3$, by using \eqref{ztt:aat}, we have 
\begin{eqnarray*}
	\int_{t_{0}}^{+\infty}t(\mathcal{L}_{\rho}(x(t),\lambda^*)-\mathcal{L}_{\rho}(x^*,\lambda^*))dt<+\infty.
\end{eqnarray*}
This together with \eqref{ztt:sst} implies
\begin{eqnarray*}
	\int_{t_{0}}^{+\infty}t\|\nabla f(x(t))-\nabla f(x^*)\|^2dt<+\infty.
\end{eqnarray*}	
\end{proof}

\begin{remark}\label{qq:sst}
	Under the condition  $\int_{t_{0}}^{+\infty}t\epsilon(t)dt<+\infty$, Attouch, Chbani, and Riahi  proved in \cite[Theorem 3.1]{AttouchZH2018} that $\text{(AVD)}_{\alpha, \epsilon}$ enjoys the fast convergence rates: (i) For $\alpha\ge 3$, $|f(x(t))-f(x^*)|=\mathcal{O}(\frac{1}{t^2})$. (ii) For $\alpha >3$,  $|f(x(t))-f(x^*)|={o}(\frac{1}{t^2})$ and the trajectory $x(t)$ converges weakly to a solution of the problem \eqref{up-f}. As a comparison, in Theorem \ref{ztt2.2} we can only obtain $|f(x(t))-f(x^*)|=\mathcal{O}(\frac{1}{t^2})$ for $\alpha\ge 3$. For $\alpha>3$, we cannot prove  $|f(x(t))-f(x^*)|=o(\frac{1}{t^2})$ and the weak convergence of $x(t)$ by using the approach developed in \cite[Theorem 3.1]{AttouchZH2018}, since the second-order term appears only in the primal variable of our dynamical system \eqref{z2}. When $\alpha>3$, some natural problems arise: 1) Is it true that the primal-dual gap, the objective function value and the feasibility measure along the trajectories of second-order plus first-order primal-dual dynamical systems own the $o(\frac{1}{t^2})$ convergence rate? 2) Whether the primal-dual trajectory of the second-order plus first-order primal-dual dynamical system converges weakly to a primal-dual optimal solution of the problem \eqref{z1}. The latter has been shown to be true for second-order plus second-order primal-dual dynamical system. See Bot and Nguyen \cite{BNguyen2022}.
\end{remark}

\begin{remark}
	Under the condition $\int_{t_0}^{+\infty} t\|\varepsilon(t)\|dt<+\infty$, it was shown in \cite[Theorem 2]{HeHFiietal(2022)} that $\text{(He-AVD)}_{\alpha}$ enjoys the fast convergence rates:  $|f(x(t))-f(x^*)|=\mathcal{O}(\frac{1}{t^2})$ and $\|Ax(t)-b\|=\mathcal{O}(\frac{1}{t^2})$ which are the same as the ones of Theorem \ref{ztt2.2}. However,  the convergence result of Theorem \ref{ztt2.2} is not a consequence of \cite[Theorem 2]{HeHFiietal(2022)} since we don't know a priori if the trajectory $(x(t))_{t\geq t_0}$ is bounded.
\end{remark}

\begin{remark}\label{corollary: eqfang}
	By taking $\epsilon(t)=\frac{c}{t^r}$ with $r>2$ and $c>0$, we have $\int_{t_0}^{+\infty}t\epsilon(t)<+\infty$. Thus, we can obtain all conclusions of Theorem \ref{ztt2.2}.
\end{remark}

In Theorem \ref{ztt2.2}, we establish the fast convergence properties of \eqref{z2} when $\int_{t_{0}}^{+\infty}t\epsilon(t)dt<+\infty$ which reflects the case that the Tikhonov regularization parameter $\epsilon(t)$ decreases rapidly to zero. Next, we analyze the minimization properties of \eqref{z2} under the condition $\int_{t_0}^{+\infty}\frac{\epsilon(t)}{t}dt<+\infty$.

\begin{lemma}\label{lemma2.2}
Let $(x,\lambda): [t_{0},+\infty)\rightarrow \mathcal{X}\times\mathcal{Y}$ be a solution of \eqref{z2} and $(x^*,\lambda^*)\in\Omega$. Denote	
\begin{eqnarray}\label{z3}
	\widetilde{\mathcal{E}} (t)&=&(\mathcal{L}_{\rho}(x(t),\lambda^*)-\mathcal{L}_{\rho}(x^*,\lambda^*))+\frac{\epsilon(t)}{2}\|x(t)\|^2
	\nonumber\\
	&&+\frac{1}{2}\|\frac{\alpha-1}{t}(x(t)-x^*)+\dot{x}(t)\|^2+\frac{\alpha-1}{2t^2}\|\lambda(t)-\lambda^*\|^2.
\end{eqnarray}
For every $t\geq t_{0}$, it holds
\begin{eqnarray*}
	\frac{2}{t}\widetilde{\mathcal{E}} (t)+\dot{\widetilde{\mathcal{E}}}(t)
	&\leq&\frac{(3-\alpha)}{t}(\mathcal{L}_{\rho}(x(t),\lambda^*)-\mathcal{L}_{\rho}(x^*,\lambda^*))\\
	&&+\frac{1}{2t}\left[(3-\alpha)\epsilon(t)+t\dot{\epsilon}(t)\right]\|x(t)\|^2+\frac{(\alpha-1)\|x^*\|^2}{2}\dfrac{\epsilon(t)}{t}\\
	&&-\frac{(\alpha-1)\epsilon(t)}{2t}\|x(t)-x^*\|^2\nonumber
	-\frac{\rho(\alpha-1)}{2t}\|Ax(t)-b\|^2, \quad \forall t\geq t_{0}.
\end{eqnarray*}
\end{lemma}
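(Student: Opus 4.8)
The plan is to observe that the Lyapunov function $\widetilde{\mathcal{E}}$ of \eqref{z3} is nothing but the time rescaling $\widetilde{\mathcal{E}}(t)=t^{-2}\mathcal{E}(t)$ of the energy $\mathcal{E}$ from \eqref{z11}. Indeed, expanding the squared norm,
\[
\frac{1}{2}\Bigl\|\frac{\alpha-1}{t}(x(t)-x^*)+\dot{x}(t)\Bigr\|^2=\frac{1}{2t^2}\bigl\|(\alpha-1)(x(t)-x^*)+t\dot{x}(t)\bigr\|^2,
\]
and matching the remaining two summands term by term shows that the identity $\widetilde{\mathcal{E}}(t)=t^{-2}\mathcal{E}(t)$ holds exactly for every $t\ge t_0$. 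Differentiating this identity gives $\dot{\widetilde{\mathcal{E}}}(t)=t^{-2}\dot{\mathcal{E}}(t)-2t^{-3}\mathcal{E}(t)$, hence
\[
\frac{2}{t}\widetilde{\mathcal{E}}(t)+\dot{\widetilde{\mathcal{E}}}(t)=\frac{1}{t^2}\dot{\mathcal{E}}(t),
\]
so the lemma is equivalent to the estimate for $\dot{\mathcal{E}}(t)$ already obtained inside the proof of Theorem \ref{ztt2.2}, after division by $t^2$.

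Next I would recall precisely that estimate. Combining the identity \eqref{zccz2} with the inequality \eqref{zccz1}, which comes from the $\epsilon(t)$-strong convexity of $x\mapsto f(x)+\frac{\epsilon(t)}{2}\|x\|^2$, the cross terms $\pm(\alpha-1)t\epsilon(t)\langle x(t)-x^*,x(t)\rangle$ cancel and one arrives, \emph{without any use of $\alpha\ge 3$ or of the monotonicity of $\epsilon$}, at
\begin{align*}
\dot{\mathcal{E}}(t)&\le(3-\alpha)t\bigl(\mathcal{L}_{\rho}(x(t),\lambda^*)-\mathcal{L}_{\rho}(x^*,\lambda^*)\bigr)+\Bigl[\bigl(1-\tfrac{\alpha-1}{2}\bigr)t\epsilon(t)+\tfrac{t^2}{2}\dot{\epsilon}(t)\Bigr]\|x(t)\|^2\\
&\quad+\frac{(\alpha-1)t\epsilon(t)}{2}\bigl(\|x^*\|^2-\|x(t)-x^*\|^2\bigr)-\frac{\rho(\alpha-1)t}{2}\|Ax(t)-b\|^2 .
\end{align*}
Dividing this by $t^2$ and rewriting the coefficient of $\|x(t)\|^2$ via $1-\frac{\alpha-1}{2}=\frac{3-\alpha}{2}$, so that $\frac{1}{t^2}\bigl[(1-\frac{\alpha-1}{2})t\epsilon(t)+\frac{t^2}{2}\dot{\epsilon}(t)\bigr]=\frac{1}{2t}\bigl[(3-\alpha)\epsilon(t)+t\dot{\epsilon}(t)\bigr]$, together with the previous display gives exactly the asserted inequality for every $t\ge t_0$.

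If one prefers a self-contained derivation, the alternative is to differentiate \eqref{z3} directly, substitute $\ddot{x}(t)$ and $\dot{\lambda}(t)$ from \eqref{z2}, note that the cross terms in $A^T(\lambda(t)-\lambda^*)$ and in $\dot{x}(t)$ cancel precisely because of the coefficient $\frac{t}{\alpha-1}$ in the second line of \eqref{z2}, and then apply \eqref{zccz1} to control $\langle\nabla_x\mathcal{L}_{\rho}(x(t),\lambda^*),\,x^*-x(t)\rangle$ before collecting terms. There is no genuine obstacle here: the only point requiring care is the bookkeeping of the time-dependent quadratic terms, since the weights $\frac{1}{t}$ and $\frac{1}{t^2}$ in $\widetilde{\mathcal{E}}$ produce additional lower-order contributions upon differentiation, and one must verify that the identity $\widetilde{\mathcal{E}}=t^{-2}\mathcal{E}$ is exact — this is what makes the whole computation collapse to the rescaled form of the one already performed for Theorem \ref{ztt2.2}.
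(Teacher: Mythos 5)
Your proof is correct, and it takes a more economical route than the paper. The paper proves Lemma \ref{lemma2.2} by differentiating $\widetilde{\mathcal{E}}$ from scratch: it substitutes $\ddot{x}(t)$ and $\dot{\lambda}(t)$ from \eqref{z2}, cancels the dual cross terms, expands the squared norm in \eqref{z3} to express $\frac{2}{t}\widetilde{\mathcal{E}}(t)$, and invokes \eqref{zccz1} --- essentially repeating the computation already carried out for $\mathcal{E}$ in Theorem \ref{ztt2.2}. You instead observe the exact scaling identity $\widetilde{\mathcal{E}}(t)=t^{-2}\mathcal{E}(t)$ (which does hold: the quadratic term satisfies $\frac{t^2}{2}\|\frac{\alpha-1}{t}(x(t)-x^*)+\dot{x}(t)\|^2=\frac{1}{2}\|(\alpha-1)(x(t)-x^*)+t\dot{x}(t)\|^2$ and the other summands match after multiplication by $t^2$), so that $\frac{2}{t}\widetilde{\mathcal{E}}(t)+\dot{\widetilde{\mathcal{E}}}(t)=t^{-2}\dot{\mathcal{E}}(t)$, and the lemma reduces to dividing by $t^2$ the bound on $\dot{\mathcal{E}}(t)$ obtained from \eqref{zccz2} and \eqref{zccz1}. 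You are also right that this bound requires neither $\alpha\ge 3$ nor the monotonicity of $\epsilon$ (those enter only in the subsequent steps of Theorem \ref{ztt2.2}), which matters because Lemma \ref{lemma2.2} is stated without these hypotheses. What your approach buys is the elimination of a duplicated computation and a clearer view of the relationship between the two Lyapunov functions; what the paper's direct differentiation buys is a self-contained proof that does not depend on material buried inside the proof of another theorem. Both are valid, and your sketch of the direct alternative is precisely the paper's argument.
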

\begin{proof}
By the definitions of $u(t)$ and $\widetilde{\mathcal{E}}(t)$, we have
\begin{eqnarray*}
	\dot{\widetilde{\mathcal{E}}}(t)
	&=&\langle\nabla_{x}\mathcal{L}_{\rho}(x(t),\lambda^*),\dot{x}(t)\rangle+\epsilon(t)\langle x(t),\dot{x}(t)\rangle+\frac{\dot{\epsilon}(t)}{2}\|x(t)\|^2\\
	&&+\langle\frac{\alpha-1}{t}(x(t)-x^*)+\dot{x}(t),-\frac{(\alpha-1)}{ t^2}(x(t)-x^*)
	+\frac{\alpha-1}{t}\dot{x}(t)+\ddot{x}(t)\rangle\\
	&&+\frac{\alpha-1}{t^2}\langle \lambda(t)-\lambda^*,\dot{\lambda}(t)\rangle
	 -\frac{(\alpha-1)}{ t^3}\|\lambda(t)-\lambda^*\|^2.
\end{eqnarray*}
By using \eqref{z2}, we have
\begin{eqnarray*}
	&&\langle \frac{\alpha-1}{t}(x(t)-x^*)+\dot{x}(t),-\frac{(\alpha-1)}{ t^2}(x(t)-x^*)
	+\frac{\alpha-1}{t}\dot{x}(t)+\ddot{x}(t)\rangle\\
	&&\quad=\langle\frac{\alpha-1}{t}(x(t)-x^*)+\dot{x}(t),-\frac{(\alpha-1)}{ t^2}(x(t)-x^*)
	-\frac{1}{t}\dot{x}(t)
	-\nabla_{x}\mathcal{L}_{\rho}(x(t),\lambda(t))\rangle\\
	&&\qquad-\langle\frac{\alpha-1}{t}(x(t)-x^*)+\dot{x}(t), \epsilon(t)x(t)\rangle\\
	&&\quad=-\frac{(\alpha-1)^2}{t^3}\|x(t)-x^*\|^2- \frac{2(\alpha-1)}{t^2}\langle x(t)-x^*, \dot{x}(t)\rangle-\frac{1}{t}\|\dot{x}(t)\|^2\\
	&&\qquad-\frac{(\alpha-1)}{t}\langle x-x^*, \nabla_{x}\mathcal{L}_{\rho}(x(t),\lambda^*)\rangle
	-\langle \dot{x}(t), \nabla_{x}\mathcal{L}_{\rho}(x(t),\lambda^*)\rangle\\
	&&\qquad-\langle \frac{\alpha-1}{t}(x(t)-x^*)+\dot{x}(t), A^T(\lambda(t)-\lambda^*)\rangle\\
	&&\qquad-\frac{(\alpha-1)\epsilon(t)}{t}\langle x(t)-x^*, x(t)\rangle
	-\epsilon(t)\langle x(t), \dot{x}(t)\rangle
\end{eqnarray*}
and
\begin{eqnarray*}
	\langle \lambda(t)-\lambda^*, \dot{\lambda}(t)\rangle=t\langle A^T(\lambda(t)-\lambda^*), x(t)-x^*+\frac{t}{\alpha-1}\dot{x}(t)\rangle.
\end{eqnarray*}
Thus, we have
\begin{eqnarray*}\label{z4}
	\dot{\widetilde{\mathcal{E}}}(t)&=&\frac{\dot{\epsilon}(t)}{2}\|x(t)\|^2-\frac{(\alpha-1)^2}{t^3}\|x(t)-x^*\|^2-\frac{2(\alpha-1)}{t^2}\langle x(t)-x^*,\dot{x}(t)\rangle\nonumber\\
	&&-\frac{1}{t}\|\dot{x}(t)\|^2
	-\frac{(\alpha-1)}{t}\langle x-x^*, \nabla_{x}\mathcal{L}_{\rho}(x(t),\lambda^*)\rangle\nonumber\\
	&&-\frac{(\alpha-1)\epsilon(t)}{t}\langle x(t)-x^*,x(t)\rangle
	-\frac{(\alpha-1)}{t^3}\|\lambda(t)-\lambda^*\|^2,
\end{eqnarray*}
which together with \eqref{zccz1} implies
\begin{eqnarray*}\label{z5}
	\dot{\widetilde{\mathcal{E}}}(t)&\leq&\left(\frac{\dot{\epsilon}(t)}{2}-\frac{(\alpha-1)\epsilon(t)}{2 t}\right)\|x(t)\|^2
	-\frac{(\alpha-1)}{2 t^3}(2(\alpha-1)
	+t^2\epsilon(t))\|x(t)-x^*\|^2\nonumber\\
	&&+\frac{(\alpha-1)\epsilon(t)}{2 t}\|x^*\|^2-\frac{1}{t}\|\dot{x}(t)\|^2
	-\frac{(\alpha-1)}{ t}(\mathcal{L}_{\rho}(x(t),\lambda^*)-\mathcal{L}_{\rho}(x^*,\lambda^*))\nonumber\\
	&&-\frac{2(\alpha-1)}{t^2}\langle x(t)-x^*,\dot{x}(t)\rangle
	-\frac{(\alpha-1)}{ t^3}\|\lambda(t)-\lambda^*\|^2\\
	&&-\frac{\rho(\alpha-1)}{2t}\|Ax(t)-b\|^2.\nonumber
\end{eqnarray*}
According to \eqref{z3}, we obtain 
\begin{eqnarray*}
	\widetilde{\mathcal{E}}(t)&=&(\mathcal{L}_{\rho}(x(t),\lambda^*)-\mathcal{L}_{\rho}(x^*,\lambda^*))+\frac{\epsilon(t)}{2}\|x(t)\|^2+\frac{(\alpha-1)^2}{2t^2}\|x(t)-x^*\|^2\\
	&&+\frac{1}{2}\|\dot{x}(t)\|^2
	+\frac{\alpha-1}{t}\langle x(t)-x^*,\dot{x}(t)\rangle+\frac{\alpha-1}{2 t^2}\|\lambda(t)-\lambda^*\|^2.
\end{eqnarray*}
Thus, we have 
\begin{eqnarray*}
	\frac{2}{t}\widetilde{\mathcal{E}}(t)+\dot{\widetilde{\mathcal{E}}}(t)
	&\leq&\frac{(3-\alpha)}{t}(\mathcal{L}_{\rho}(x(t),\lambda^*)
	-\mathcal{L}_{\rho}(x^*,\lambda^*))\\
	&&+\frac{1}{2t}\left[(3-\alpha)\epsilon(t)+t\dot{\epsilon}(t)\right]\|x(t)\|^2+\frac{(\alpha-1)\|x^*\|^2}{2}\frac{\epsilon(t)}{t}\\
	&&-\frac{(\alpha-1)\epsilon(t)}{2t}\|x(t)-x^*\|^2\nonumber
	-\frac{\rho(\alpha-1)}{2t}\|Ax(t)-b\|^2, \quad \forall t\geq t_{0}.
\end{eqnarray*} 	
\end{proof}

Based on Lemma \ref{lemma2.2}, we discuss the minimization properties of \eqref{z2}.

\begin{theorem}\label{theorem:ztt}
	Suppose that $\alpha\geq3$ and $\epsilon: [t_0,+\infty)\rightarrow\mathbb{R}^+$ is a $\mathcal{C}^1$ nonincreasing function satisfying 
	$\int_{t_{0}}^{+\infty}\frac{\epsilon(t)} {t}dt<+\infty$.
	Let $(x,\lambda): [t_{0},+\infty)\rightarrow \mathcal{X}\times\mathcal{Y}$ be a solution of \eqref{z2}. For any $(x^*,\lambda^*)\in\Omega$, the following conclusions hold: 
	$$\lim_{t\rightarrow+\infty}\mathcal{L}_{\rho}(x(t),\lambda^*)-\mathcal{L}_{\rho}(x^*,\lambda^*)=0,$$
	$$\lim_{t\rightarrow+\infty}\|\frac{\alpha-1}{ t}(x(t)-x^*)+\dot{x}(t)\|=0.$$
\end{theorem}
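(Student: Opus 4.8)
The plan is a one-shot Lyapunov estimate built on the energy $\widetilde{\mathcal{E}}$ from Lemma \ref{lemma2.2}, combined with the integrating factor $t^2$ and the asymptotic averaging Lemma \ref{lemma2.2.3}.

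First I would record that $\widetilde{\mathcal{E}}(t)\ge 0$ for all $t\ge t_0$, since it is the sum of the four manifestly nonnegative quantities $\mathcal{L}_{\rho}(x(t),\lambda^*)-\mathcal{L}_{\rho}(x^*,\lambda^*)$ (recall $x^*$ minimizes the convex map $x\mapsto\mathcal{L}_{\rho}(x,\lambda^*)$), $\frac{\epsilon(t)}{2}\|x(t)\|^2$, $\frac12\|\frac{\alpha-1}{t}(x(t)-x^*)+\dot{x}(t)\|^2$, and $\frac{\alpha-1}{2t^2}\|\lambda(t)-\lambda^*\|^2$. Next, in the inequality of Lemma \ref{lemma2.2} I would discard the nonpositive terms: under $\alpha\ge 3$ we have $(3-\alpha)(\mathcal{L}_{\rho}(x(t),\lambda^*)-\mathcal{L}_{\rho}(x^*,\lambda^*))\le 0$ and $(3-\alpha)\epsilon(t)\|x(t)\|^2\le 0$; since $\epsilon$ is nonincreasing, $\dot\epsilon(t)\|x(t)\|^2\le 0$; and $-\frac{(\alpha-1)\epsilon(t)}{2t}\|x(t)-x^*\|^2-\frac{\rho(\alpha-1)}{2t}\|Ax(t)-b\|^2\le 0$. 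What survives is
$$\dot{\widetilde{\mathcal{E}}}(t)+\frac{2}{t}\,\widetilde{\mathcal{E}}(t)\;\le\; h(t),\qquad h(t):=\frac{(\alpha-1)\|x^*\|^2}{2}\,\frac{\epsilon(t)}{t},$$
and the hypothesis $\int_{t_0}^{+\infty}\frac{\epsilon(t)}{t}\,dt<+\infty$ says exactly $h\in L^1([t_0,+\infty))$. Note that no a priori boundedness of the trajectory is needed here, precisely because the $\|x(t)\|^2$-terms are thrown away with the right sign.

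Then I would multiply through by $t^2$ to get $\frac{d}{dt}\big(t^2\widetilde{\mathcal{E}}(t)\big)\le t^2 h(t)$, integrate on $[t_0,t]$, and obtain $\widetilde{\mathcal{E}}(t)\le\frac{t_0^2\widetilde{\mathcal{E}}(t_0)}{t^2}+\frac{1}{t^2}\int_{t_0}^{t}s^2 h(s)\,ds$. Applying Lemma \ref{lemma2.2.3} with $\phi=h$ (nonnegative, continuous, and $L^1$) and the nondecreasing $\psi(s)=s^2\to+\infty$ shows the right-hand side tends to $0$; combined with $\widetilde{\mathcal{E}}(t)\ge 0$ this gives $\lim_{t\to+\infty}\widetilde{\mathcal{E}}(t)=0$. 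Since $\widetilde{\mathcal{E}}(t)$ dominates both $\mathcal{L}_{\rho}(x(t),\lambda^*)-\mathcal{L}_{\rho}(x^*,\lambda^*)$ and $\frac12\|\frac{\alpha-1}{t}(x(t)-x^*)+\dot{x}(t)\|^2$, the two claimed limits follow at once.

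The argument is essentially mechanical; the one place to be careful is the sign bookkeeping in passing from Lemma \ref{lemma2.2} to the reduced inequality — this is exactly where the two hypotheses $\alpha\ge 3$ and ``$\epsilon$ nonincreasing'' are spent — together with noticing that the correct integrating factor is $t^2$, so that Lemma \ref{lemma2.2.3} still applies with $\phi=h$ even though $\int_{t_0}^{+\infty}s^2 h(s)\,ds$ may diverge (we only control $\int\epsilon(s)/s$, not $\int s\,\epsilon(s)$). An equivalent route avoiding Lemma \ref{lemma2.2.3} is to note $\dot{\widetilde{\mathcal{E}}}(t)\le h(t)$ forces $\lim_{t\to+\infty}\widetilde{\mathcal{E}}(t)$ to exist, while $\int_{t_0}^{+\infty}\frac{\widetilde{\mathcal{E}}(t)}{t}\,dt<+\infty$ (obtained by integrating the reduced inequality) together with $\int_{t_0}^{+\infty}\frac{dt}{t}=+\infty$ forces $\liminf_{t\to+\infty}\widetilde{\mathcal{E}}(t)=0$, whence the limit is $0$.
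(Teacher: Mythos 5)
Your proposal is correct and follows essentially the same route as the paper: the same energy $\widetilde{\mathcal{E}}$ from Lemma \ref{lemma2.2}, the same sign bookkeeping using $\alpha\geq 3$ and $\dot{\epsilon}\leq 0$ to reduce to $\dot{\widetilde{\mathcal{E}}}(t)+\frac{2}{t}\widetilde{\mathcal{E}}(t)\leq\frac{(\alpha-1)\|x^*\|^2}{2}\frac{\epsilon(t)}{t}$, the same integrating factor $t^2$, and the same application of Lemma \ref{lemma2.2.3} with $\psi(t)=t^2$ and $\phi(t)=\epsilon(t)/t$. Your closing alternative (limit exists plus $\int_{t_0}^{+\infty}\widetilde{\mathcal{E}}(t)/t\,dt<+\infty$) is also valid but is not what the paper does.
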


\begin{proof}
Since $\alpha\geq3$, $\dot{\epsilon}(t)\leq0$ and  $\mathcal{L}_{\rho}(x(t),\lambda^*)-\mathcal{L}_{\rho}(x^*,\lambda^*)\geq0$, it follows from Lemma \ref{lemma2.2} that 
\begin{eqnarray*}
	\frac{2}{t}\widetilde{\mathcal{E}}(t)+\dot{\widetilde{\mathcal{E}}}(t)\leq\frac{(\alpha-1)\|x^*\|^2}{2}\frac{\epsilon(t)}{t}, \quad \forall t\geq t_{0}.
\end{eqnarray*}
Multiplying both sides of the above inequality by $t^2$, we have
\begin{eqnarray*}\label{z9}
	\frac{d}{dt}\left(t^{2}\widetilde{\mathcal{E}}(t)\right)\leq\frac{(\alpha-1)\|x^*\|^2}{2}t\epsilon(t), \quad \forall t\geq t_{0}.
\end{eqnarray*}
Integrating it from $t_{0}$ to $t$, we obtain
\begin{eqnarray*}
	t^{2}\widetilde{\mathcal{E}}(t)\leq t_{0}^{2}\widetilde{\mathcal{E}}(t_0)
	+\frac{(\alpha-1)\|x^*\|^2}{2}\int_{t_0}^{t}s\epsilon(s)ds.
\end{eqnarray*}
Dividing both sides of the last inequality  by $t^{2}$, we have
\begin{eqnarray}\label{dudu3}
	\widetilde{\mathcal{E}}(t)\leq \frac{t_{0}^{2}\widetilde{\mathcal{E}}(t_0)}{t^{2}}
	+\frac{(\alpha-1)\|x^*\|^2}{2t^2}\int_{t_0}^{t}s^2\frac{\epsilon(s)}{s}ds.
\end{eqnarray}
Because of $\int_{t_0}^{+\infty}\frac{\epsilon(t)}{t}dt<+\infty$,
applying Lemma \ref{lemma2.2.3} with $\psi(t)=t^{2}$ and $\phi(t)=\frac{\epsilon(t)}{t}$ to \eqref{dudu3}, we have
\begin{eqnarray*}
	\lim_{t\rightarrow+\infty}\frac{1}{ t^{2}}\int_{t_0}^{t}s^{2}\frac{\epsilon(s)}{s}ds=0,
\end{eqnarray*}
which together with $\widetilde{\mathcal{E}}(t)\geq0$ implies 
\begin{eqnarray*}
	\lim_{t\rightarrow+\infty}\widetilde{\mathcal{E}}(t)=0.
\end{eqnarray*}
This together with \eqref{z3} implies 
\begin{eqnarray*}
	\lim_{t\rightarrow+\infty}\mathcal{L}_{\rho}(x(t),\lambda^*)-\mathcal{L}_{\rho}(x^*,\lambda^*)=0,
\end{eqnarray*}
\begin{eqnarray*}
\lim_{t\rightarrow+\infty}\|\frac{\alpha-1}{ t}(x(t)-x^*)+\dot{x}(t)\|=0.
\end{eqnarray*}
The proof is complete.	
\end{proof}

Further, when  $\epsilon(t)=\frac{c}{t^r}$ with $0<r\leq2$ and $c>0$, we will show that the convergence rates of the trajectories of \eqref{z2} depend upon $r$.

\begin{theorem}\label{dudu2.2}
Suppose that $\alpha\geq3$ and $\epsilon(t)=\frac{c}{t^r}$ with $0<r\leq2$ and $c>0$. Let $(x,\lambda): [t_{0},+\infty)\rightarrow \mathcal{X}\times\mathcal{Y}$ be a solution of \eqref{z2}. For any $(x^*,\lambda^*)\in\Omega$, the following conclusions hold:
\begin{itemize}
	\item[(i)] If $0<r<2$, then the trajectory $(x(t))_{t\geq t_{0}}$ is bounded, and  it holds
	$$\mathcal{L}_{\rho}(x(t),\lambda^*)-\mathcal{L}_{\rho}(x^*,\lambda^*)=\mathcal{O}\left(\frac{1}{t^r}\right),$$
	$$\|\frac{\alpha-1}{ t}(x(t)-x^*)+\dot{x}(t)\|=\mathcal{O}\left(\frac{1}{t^{\frac{r}{2}}}\right), \quad \left\|\dot{x}(t)\right\|=\mathcal{O}\left(\frac{1}{t^{\frac{r}{2}}}\right).$$
	\item[(ii)] If $r=2$, it holds
	$$\mathcal{L}_{\rho}(x(t),\lambda^*)-\mathcal{L}_{\rho}(x^*,\lambda^*)=\mathcal{O}\left(\frac{\ln t}{t^2}\right),$$
	$$\|\frac{\alpha-1}{t}(x(t)-x^*)+\dot{x}(t)\|=\mathcal{O}\left(\frac{\sqrt{\ln t}}{t}\right).$$
\end{itemize}
\end{theorem}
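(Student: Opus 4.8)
The plan is to specialize the differential inequality of Lemma~\ref{lemma2.2} to $\epsilon(t)=c/t^r$ and integrate. First I would record that $\dot\epsilon(t)=-rc/t^{r+1}$, so that $t\dot\epsilon(t)=-r\epsilon(t)$; hence the coefficient of $\|x(t)\|^2$ on the right-hand side of Lemma~\ref{lemma2.2} is $\frac{1}{2t}\left[(3-\alpha)\epsilon(t)+t\dot\epsilon(t)\right]=\frac{(3-\alpha-r)\epsilon(t)}{2t}\le0$, because $\alpha\ge3$ and $r>0$. Together with $\frac{3-\alpha}{t}\bigl(\mathcal{L}_\rho(x(t),\lambda^*)-\mathcal{L}_\rho(x^*,\lambda^*)\bigr)\le0$, $-\frac{(\alpha-1)\epsilon(t)}{2t}\|x(t)-x^*\|^2\le0$ and $-\frac{\rho(\alpha-1)}{2t}\|Ax(t)-b\|^2\le0$, discarding these four terms leaves
\[
\frac{2}{t}\widetilde{\mathcal{E}}(t)+\dot{\widetilde{\mathcal{E}}}(t)\;\le\;\frac{c(\alpha-1)\|x^*\|^2}{2}\,t^{-r-1},\qquad\forall\,t\ge t_0 .
\]

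Multiplying by $t^2$ converts the left-hand side into $\frac{d}{dt}\bigl(t^2\widetilde{\mathcal{E}}(t)\bigr)$, so integration over $[t_0,t]$ gives $t^2\widetilde{\mathcal{E}}(t)\le t_0^2\widetilde{\mathcal{E}}(t_0)+\frac{c(\alpha-1)\|x^*\|^2}{2}\int_{t_0}^t s^{1-r}\,ds$. The integral is where the two cases separate: for $0<r<2$ it equals $\frac{t^{2-r}-t_0^{2-r}}{2-r}$, whence $\widetilde{\mathcal{E}}(t)=\mathcal{O}(t^{-r})$ after dividing by $t^2$; for $r=2$ it equals $\ln t-\ln t_0$, whence $\widetilde{\mathcal{E}}(t)=\mathcal{O}(t^{-2}\ln t)$. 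Since every summand in the definition \eqref{z3} of $\widetilde{\mathcal{E}}(t)$ is nonnegative, the inequalities $\mathcal{L}_\rho(x(t),\lambda^*)-\mathcal{L}_\rho(x^*,\lambda^*)\le\widetilde{\mathcal{E}}(t)$ and $\tfrac12\bigl\|\tfrac{\alpha-1}{t}(x(t)-x^*)+\dot x(t)\bigr\|^2\le\widetilde{\mathcal{E}}(t)$ deliver at once the announced rates for the augmented Lagrangian residual and for $\bigl\|\tfrac{\alpha-1}{t}(x(t)-x^*)+\dot x(t)\bigr\|$, in both cases.

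For the remaining assertions in case $0<r<2$, the crucial point is that the Tikhonov term $\tfrac{\epsilon(t)}{2}\|x(t)\|^2$ is itself one of the nonnegative summands of $\widetilde{\mathcal{E}}(t)$: from $\tfrac{\epsilon(t)}{2}\|x(t)\|^2\le\widetilde{\mathcal{E}}(t)=\mathcal{O}(t^{-r})$ and $\epsilon(t)=c/t^r$ one gets $\|x(t)\|^2=\mathcal{O}(1)$, i.e. $(x(t))_{t\ge t_0}$ is bounded. I would emphasize that, in contrast with Theorem~\ref{ztt2.2}, Lemma~\ref{lemma2.1.1} is of no help here, because one only controls $\bigl\|x(t)-x^*+\tfrac{t}{\alpha-1}\dot x(t)\bigr\|=\mathcal{O}(t^{1-r/2})$, which need not be bounded; it is precisely the regularization term that rescues the argument. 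Finally, from $\dot x(t)=\bigl(\tfrac{\alpha-1}{t}(x(t)-x^*)+\dot x(t)\bigr)-\tfrac{\alpha-1}{t}(x(t)-x^*)$, the first bracket is $\mathcal{O}(t^{-r/2})$ and the second is $\mathcal{O}(t^{-1})=\mathcal{O}(t^{-r/2})$ since $r/2\le1$, so $\|\dot x(t)\|=\mathcal{O}(t^{-r/2})$.

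I do not anticipate a genuine obstacle: with Lemma~\ref{lemma2.2} in hand the argument reduces to a scalar Grönwall-type integration. The only delicate points are the sign bookkeeping that allows all unwanted terms on the right-hand side to be dropped — in particular checking that $(3-\alpha-r)\epsilon(t)\le0$ — and the realization that boundedness of $x(t)$ must be harvested from the $\epsilon(t)\|x(t)\|^2$ term rather than from Lemma~\ref{lemma2.1.1}.
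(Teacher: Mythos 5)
Your proposal is correct and follows essentially the same route as the paper: specialize the differential inequality of Lemma~\ref{lemma2.2} to $\epsilon(t)=c/t^r$, discard the nonpositive terms, multiply by $t^2$ and integrate to get $\widetilde{\mathcal{E}}(t)=\mathcal{O}(t^{-r})$ for $0<r<2$ and $\mathcal{O}(t^{-2}\ln t)$ for $r=2$, then read the individual rates off the nonnegative summands of $\widetilde{\mathcal{E}}$, with boundedness of $x(t)$ harvested from the Tikhonov term $\tfrac{\epsilon(t)}{2}\|x(t)\|^2$ exactly as the paper does. Your explicit sign check on the coefficient $(3-\alpha-r)\epsilon(t)/(2t)$ and the decomposition yielding $\|\dot x(t)\|=\mathcal{O}(t^{-r/2})$ are just more detailed renderings of steps the paper leaves implicit.
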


\begin{proof}
(i) By using \eqref{z3} and $\epsilon(t)=\frac{c}{t^r}$ with $0<r<2$, we have
\begin{eqnarray}\label{qq:ztt}
	\widetilde{\mathcal{E}} (t)&=&(\mathcal{L}_{\rho}(x(t),\lambda^*)-\mathcal{L}_{\rho}(x^*,\lambda^*))+\frac{c}{2t^r}\|x(t)\|^2\nonumber\\
	&&+\frac{1}{2}\|\frac{\alpha-1}{t}(x(t)-x^*)+\dot{x}(t)\|^2+\frac{\alpha-1}{2t^2}\|\lambda(t)-\lambda^*\|^2. 
\end{eqnarray}
Because of $0<r<2$ and $\alpha\geq3$, it is easy to verify that all conditions of Theorem \ref{theorem:ztt} are satisfied. Using the method similar to proving \eqref{dudu3}, we can obtain
\begin{eqnarray*}
	\widetilde{\mathcal{E}}(t)&\leq& \frac{t_{0}^{2}\widetilde{\mathcal{E}}(t_0)}{t^{2}}+\frac{c(\alpha-1)\|x^*\|^2}{2t^{2}}\int_{t_0}^{t}s^{1-r}ds\\
	&\leq&\frac{t_{0}^{2}\widetilde{\mathcal{E}}(t_0)}{t^{2}}+\frac{c(\alpha-1)\|x^*\|^2}{2(2-r)t^r}.
\end{eqnarray*} 
Multiplying both sides of the last inequality by $t^r$, we have
\begin{eqnarray*}
	t^r\widetilde{\mathcal{E}}(t)\leq\frac{t_{0}^{2}\widetilde{\mathcal{E}}(t_0)}{t^{2-r}}+\frac{c(\alpha-1)\|x^*\|^2}{2(2-r)}, \quad \forall t\geq t_{0}.
\end{eqnarray*}
This together with  \eqref{qq:ztt} implies that the trajectory $(x(t))_{t\geq t_{0}}$ is bounded,
$$\mathcal{L}_{\rho}(x(t),\lambda^*)-\mathcal{L}_{\rho}(x^*,\lambda^*)=\mathcal{O}\left(\frac{1}{t^r}\right),$$
$$\|\frac{\alpha-1}{t}(x(t)-x^*)+\dot{x}(t)\|=\mathcal{O}\left(\frac{1}{t^{\frac{r}{2}}}\right)$$ 
and
$$\|\dot{x}(t)\|=\mathcal{O}\left(\frac{1}{t^{\frac{r}{2}}}\right).$$

(ii) By using \eqref{z3} and $\epsilon(t)=\frac{c}{t^2}$, we can obtain
\begin{eqnarray}\label{qq:vtt}
	\quad\widetilde{\mathcal{E}} (t)&=&(\mathcal{L}_{\rho}(x(t),\lambda^*)-\mathcal{L}_{\rho}(x^*,\lambda^*))+\frac{c}{2t^2}\|x(t)\|^2\nonumber\\
	&&+\frac{1}{2}\|\frac{\alpha-1}{t}(x(t)-x^*)+\dot{x}(t)\|^2+\frac{\alpha-1}{2t^2}\|\lambda(t)-\lambda^*\|^2. 
\end{eqnarray}
Since $r=2$ and $\alpha\geq3$,  all conditions of Theorem \ref{theorem:ztt} are satisfied.
Using again the method similar to proving \eqref{dudu3}, we have
\begin{eqnarray*}
	\widetilde{\mathcal{E}}(t)&\leq& \frac{t_{0}^{2}\widetilde{\mathcal{E}}(t_0)}{t^{2}}
	+\frac{(\alpha-1)\|x^*\|^2}{2t^2}\int_{t_0}^{t}\frac{1}{s}ds\\
	&\leq&\frac{t_{0}^{2}\widetilde{\mathcal{E}}(t_0)}{t^{2}}
	+\frac{(\alpha-1)\|x^*\|^2 \ln t}{2t^2}-\frac{(\alpha-1)\|x^*\|^2 \ln t_0}{2t^2}\\
	&\leq&\dfrac{C_4}{2t^2}+\frac{(\alpha-1)\|x^*\|^2 \ln t}{2t^2},
\end{eqnarray*}
where $C_4\geq 2t_{0}^{2}\widetilde{\mathcal{E}}(t_0)-(\alpha-1)\|x^*\|^2 \ln t_0$ is a nonnegative constant. This together with  \eqref{qq:vtt} implies
$$\mathcal{L}_{\rho}(x(t),\lambda^*)-\mathcal{L}_{\rho}(x^*,\lambda^*)=\mathcal{O}\left(\frac{\ln t}{t^2}\right)$$
and
$$\|\frac{\alpha-1}{ t}(x(t)-x^*)+\dot{x}(t)\|=\mathcal{O}\left(\frac{\sqrt{\ln t}}{t}\right).$$	
\end{proof}

\section{Strong convergence of the trajectory to the minimal norm solution}

In this section, we will investigate the strong convergence of  the trajectory $(x(t))_{t\geq t_{0}}$ generated by \eqref{z2}  to the minimal norm  solution of \eqref{z1} when the Tikhonov regularization parameter $\epsilon(t)$ decreases slowly to zero. Throughout this section, we assume $\rho>0$.

Let $\bar{x}^*\in\mathcal{X}$ be the minimal norm solution of the problem  \eqref{z1}, that is $\bar{x}^*=Proj_{S}0$, where $S$ is the solution set of  the problem \eqref{z1} and   $Proj$ means the projection operator. Then, there exists an optimal solution $\bar{\lambda}^*\in\mathcal{Y}$ of the Lagrange dual problem \eqref{dudu5} such that $(\bar{x}^*,\bar{\lambda}^*)\in\Omega$. For any $\epsilon>0$, define $\mathcal{L}_{\epsilon} : \mathcal{X}\rightarrow\mathbb{R}$  by
\begin{eqnarray}\label{z18}
	\mathcal{L}_{\epsilon}(x):=\mathcal{L}_{\rho}(x,\bar{\lambda}^*)+\frac{\epsilon}{2}\|x\|^2,\quad \forall x\in  \mathcal{X}.
\end{eqnarray}
Clearly,  $\mathcal{L}_{\epsilon}$ is strongly convex and has a unique minimizer.
Set
$$x_{\epsilon}:=\arg\min_{x\in\mathcal{X}}\mathcal{L}_{\epsilon}(x).$$
By using the first-order optimality condition, we obtain
\begin{eqnarray}\label{z19}
	\nabla\mathcal{L}_{\epsilon}(x_{\epsilon})=\nabla_{x}\mathcal{L}_{\rho}(x_{\epsilon},\bar{\lambda}^*)+\epsilon x_{\epsilon}=0.
\end{eqnarray}
It follows from $(\bar{x}^*, \bar{\lambda}^*)\in\Omega$ and \eqref{zc3} that $\bar{x}^*\in \mathcal{D}(\bar{\lambda}^*)$. Since $\bar{x}^*=Proj_{S}0$, from \eqref{ztt: onefang} we have
\begin{eqnarray*}
	\bar{x}^*=Proj_{\mathcal{D}(\bar{\lambda}^*)}0.
\end{eqnarray*}
The classical properties of Tikhonov regularization give
\begin{eqnarray*}
	\|x_{\epsilon}\|\leq\|\bar{x}^*\|, \quad \forall \epsilon> 0
\end{eqnarray*}
and
\begin{eqnarray}\label{z26}
	\lim_{\epsilon\rightarrow0}\|x_{\epsilon}-\bar{x}^*\|=0.
\end{eqnarray}
For details of the results of Tikhonov regularization we refer to (\cite{Attouch(1996)},\cite{AttouchCominetti1996}).

\begin{lemma}\label{lemmaztt3.3}
Let  $\bar{x}^*=Proj_{S}0$ and let  $\epsilon: [t_0, +\infty)\rightarrow [0,+\infty)$ be a $\mathcal{C}^1$ nonincreasing function satisfying $\lim_{t\rightarrow+\infty}\epsilon(t)=0$.
Let $(x,\lambda): [t_{0},+\infty)\rightarrow \mathcal{X}\times\mathcal{Y}$ be a solution of \eqref{z2}.
Then, 
\begin{eqnarray*}
\frac{\epsilon(t)}{2}(\|x(t)-x_{\epsilon(t)}\|^2+\|x_{\epsilon(t)}\|^2-\|\bar{x}^*\|^2)\leq\mathcal{L}_{\epsilon(t)}(x(t))-\mathcal{L}_{\epsilon(t)}(\bar{x}^*).
\end{eqnarray*}
\end{lemma}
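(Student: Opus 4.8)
The plan is to derive the inequality from the $\epsilon(t)$-strong convexity of $\mathcal{L}_{\epsilon(t)}$ combined with the fact that $\bar{x}^*$ minimizes $x\mapsto\mathcal{L}_{\rho}(x,\bar{\lambda}^*)$. First I would note that $\mathcal{L}_{\rho}(\cdot,\bar{\lambda}^*)$ is convex, being the sum of the convex function $f$, a linear term, and $\frac{\rho}{2}\|A\cdot-b\|^2$; hence $\mathcal{L}_{\epsilon(t)}(x)=\mathcal{L}_{\rho}(x,\bar{\lambda}^*)+\frac{\epsilon(t)}{2}\|x\|^2$ is $\epsilon(t)$-strongly convex with unique minimizer $x_{\epsilon(t)}$. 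Applying the standard strong-convexity lower bound at the minimizer (equivalently, combining the first-order optimality condition \eqref{z19} with strong convexity) gives
$$\mathcal{L}_{\epsilon(t)}(x(t))-\mathcal{L}_{\epsilon(t)}(x_{\epsilon(t)})\geq\frac{\epsilon(t)}{2}\|x(t)-x_{\epsilon(t)}\|^2.$$

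Next I would estimate the remaining gap $\mathcal{L}_{\epsilon(t)}(x_{\epsilon(t)})-\mathcal{L}_{\epsilon(t)}(\bar{x}^*)$ from below. Expanding the definition of $\mathcal{L}_{\epsilon(t)}$,
$$\mathcal{L}_{\epsilon(t)}(x_{\epsilon(t)})-\mathcal{L}_{\epsilon(t)}(\bar{x}^*)=\bigl(\mathcal{L}_{\rho}(x_{\epsilon(t)},\bar{\lambda}^*)-\mathcal{L}_{\rho}(\bar{x}^*,\bar{\lambda}^*)\bigr)+\frac{\epsilon(t)}{2}\bigl(\|x_{\epsilon(t)}\|^2-\|\bar{x}^*\|^2\bigr).$$
Since $(\bar{x}^*,\bar{\lambda}^*)\in\Omega$ is a saddle point of $\mathcal{L}_{\rho}$, the point $\bar{x}^*$ is a global minimizer of $\mathcal{L}_{\rho}(\cdot,\bar{\lambda}^*)$ (this also follows from $\nabla_x\mathcal{L}_{\rho}(\bar{x}^*,\bar{\lambda}^*)=0$ in \eqref{zc3} together with convexity), so the first parenthesis is nonnegative and therefore
$$\mathcal{L}_{\epsilon(t)}(x_{\epsilon(t)})-\mathcal{L}_{\epsilon(t)}(\bar{x}^*)\geq\frac{\epsilon(t)}{2}\bigl(\|x_{\epsilon(t)}\|^2-\|\bar{x}^*\|^2\bigr).$$

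Finally I would add the two displayed inequalities, using the decomposition $\mathcal{L}_{\epsilon(t)}(x(t))-\mathcal{L}_{\epsilon(t)}(\bar{x}^*)=\bigl(\mathcal{L}_{\epsilon(t)}(x(t))-\mathcal{L}_{\epsilon(t)}(x_{\epsilon(t)})\bigr)+\bigl(\mathcal{L}_{\epsilon(t)}(x_{\epsilon(t)})-\mathcal{L}_{\epsilon(t)}(\bar{x}^*)\bigr)$, which yields exactly
$$\frac{\epsilon(t)}{2}\bigl(\|x(t)-x_{\epsilon(t)}\|^2+\|x_{\epsilon(t)}\|^2-\|\bar{x}^*\|^2\bigr)\leq\mathcal{L}_{\epsilon(t)}(x(t))-\mathcal{L}_{\epsilon(t)}(\bar{x}^*).$$
There is no serious obstacle here: the proof is a short chain of two elementary bounds. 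The only points that need care are invoking the correct strong-convexity constant $\epsilon(t)$ and justifying the minimality of $\bar{x}^*$ for $\mathcal{L}_{\rho}(\cdot,\bar{\lambda}^*)$ through the saddle-point/optimality characterization \eqref{zc3}; the minimal-norm property $\bar{x}^*=Proj_{S}0$ is not used in this lemma and only becomes relevant later via \eqref{z26}.
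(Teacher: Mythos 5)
Your proof is correct and is essentially identical to the paper's: both combine the $\epsilon(t)$-strong convexity of $\mathcal{L}_{\epsilon(t)}$ at its minimizer $x_{\epsilon(t)}$ (via the optimality condition \eqref{z19}) with the fact that $\bar{x}^*$ minimizes $\mathcal{L}_{\rho}(\cdot,\bar{\lambda}^*)$, and then add the two resulting bounds. Your closing observation that the minimal-norm property of $\bar{x}^*$ is not needed here is also accurate.
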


\begin{proof}
By using definition, we can obtain that $\mathcal{L}_{\epsilon(t)}$ is $\epsilon(t)$-strongly convex. Thus, we have
\begin{eqnarray*}
	\mathcal{L}_{\epsilon(t)}(x(t))-\mathcal{L}_{\epsilon(t)}(x_{\epsilon(t)})\geq\langle \nabla\mathcal{L}_{\epsilon(t)}(x_{\epsilon(t)}), x(t)-x_{\epsilon(t)}\rangle+\frac{\epsilon(t)}{2}\|x(t)-x_{\epsilon(t)}\|^2,
\end{eqnarray*}
which together with \eqref{z19} implies
\begin{eqnarray*}
	\mathcal{L}_{\epsilon(t)}(x(t))-\mathcal{L}_{\epsilon(t)}(x_{\epsilon(t)})\geq \frac{\epsilon(t)}{2}\|x(t)-x_{\epsilon(t)}\|^2.
\end{eqnarray*}
By using \eqref{z18}, we have
\begin{eqnarray*}
	\mathcal{L}_{\epsilon(t)}(\bar{x}^*)-\mathcal{L}_{\epsilon(t)}(x_{\epsilon(t)})&=&\mathcal{L}_{\rho}(\bar{x}^*,\bar{\lambda}^*)
	-\mathcal{L}_{\rho}(x_{\epsilon(t)},\bar{\lambda}^*)
	+\frac{\epsilon(t)}{2}\|\bar{x}^*\|^2-\frac{\epsilon(t)}{2}\|x_{\epsilon(t)}\|^2\\
	&\leq&\frac{\epsilon(t)}{2}\|\bar{x}^*\|^2-\frac{\epsilon(t)}{2}\|x_{\epsilon(t)}\|^2,
\end{eqnarray*}
where the last inequality uses the fact that $\mathcal{L}_{\rho}(\bar{x}^*,\bar{\lambda}^*)
-\mathcal{L}_{\rho}(x_{\epsilon},\bar{\lambda}^*)\leq0$.
As a consequence,
\begin{eqnarray*}
	\frac{\epsilon(t)}{2}(\|x(t)-x_{\epsilon(t)}\|^2+\|x_{\epsilon(t)}\|^2-\|\bar{x}^*\|^2)
	\leq\mathcal{L}_{\epsilon(t)}(x(t))-\mathcal{L}_{\epsilon(t)}(\bar{x}^*).
\end{eqnarray*}	
\end{proof}

\begin{theorem}\label{theoremztt3.1}
	Suppose that $\alpha \geq3$ and $\epsilon: [t_0, +\infty)\rightarrow\mathbb{R}^+$ is a $\mathcal{C}^1$ nonincreasing function such that
	\begin{itemize}
		\item[(i)] $\lim_{t\rightarrow+\infty}t^2\epsilon(t)=+\infty$;
		\item[(ii)] $\int_{t_0}^{+\infty}\frac{\epsilon(t)}{t}dt<+\infty$.
	\end{itemize}
	Let $(x,\lambda): [t_{0},+\infty)\rightarrow \mathcal{X}\times\mathcal{Y}$ be a solution of \eqref{z2}. Then,
	$$\liminf_{t\rightarrow+\infty}\|x(t)-\bar{x}^*\|=0,$$
	where  $\bar{x}^*=proj_{S}0$.
	Further, if there exists a constant $T>0$ such that the trajectory $\{x(t): t\geq T\}$ stays in either the open ball $B(0,\|\bar{x}^*\|)$ or its complement, then $$\lim_{t\rightarrow+\infty}\|x(t)-\bar{x}^*\|=0.$$
\end{theorem}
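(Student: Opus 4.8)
The plan is to exploit the Lyapunov-type energy analysis already set up in Section 3, now anchored at the curve of Tikhonov minimizers $x_{\epsilon(t)}$ rather than at a fixed primal-dual solution. First I would introduce a time-dependent energy functional analogous to $\widetilde{\mathcal{E}}(t)$ from \eqref{z3}, but built around $(x_{\epsilon(t)},\bar\lambda^*)$; concretely, something of the form
\begin{eqnarray*}
	\mathcal{H}(t) = \mathcal{L}_{\epsilon(t)}(x(t)) - \mathcal{L}_{\epsilon(t)}(x_{\epsilon(t)}) + \tfrac{1}{2}\left\|\tfrac{\alpha-1}{t}(x(t)-\bar x^*) + \dot x(t)\right\|^2 + \tfrac{\alpha-1}{2t^2}\|\lambda(t)-\bar\lambda^*\|^2,
\end{eqnarray*}
whose leading term is nonnegative by the strong convexity estimate in Lemma \ref{lemmaztt3.3}. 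Differentiating $\mathcal{H}$ along \eqref{z2} and using \eqref{z19} (which kills the gradient term at $x_{\epsilon(t)}$), I expect to land on a differential inequality of the type $\tfrac{2}{t}\mathcal{H}(t) + \dot{\mathcal{H}}(t) \le \text{(error terms)}$, where the error terms involve $\tfrac{\epsilon(t)}{t}\|\bar x^*\|^2$, the derivative $\dot\epsilon(t)$, and — crucially — a term coming from the variation $\tfrac{d}{dt} x_{\epsilon(t)}$ of the Tikhonov curve, estimated via $\|\tfrac{d}{dt}x_{\epsilon(t)}\| \lesssim \tfrac{|\dot\epsilon(t)|}{\epsilon(t)}\|x_{\epsilon(t)}\|$ (a standard consequence of the strong monotonicity of $\nabla\mathcal{L}_{\epsilon}$).

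Next I would multiply by $t^2$, integrate from $t_0$ to $t$, and divide by $t^2$ exactly as in the proof of Theorem \ref{theorem:ztt}, invoking hypothesis (ii) together with Lemma \ref{lemma2.2.3} (with $\psi(t)=t^2$, $\phi(t)=\epsilon(t)/t$) to conclude $\lim_{t\to+\infty}\mathcal{H}(t)=0$, hence in particular $\lim_{t\to+\infty}\big(\mathcal{L}_{\epsilon(t)}(x(t))-\mathcal{L}_{\epsilon(t)}(x_{\epsilon(t)})\big)=0$. Combining this limit with Lemma \ref{lemmaztt3.3} gives
\begin{eqnarray*}
	\tfrac{\epsilon(t)}{2}\|x(t)-x_{\epsilon(t)}\|^2 \le \mathcal{H}(t) + \tfrac{\epsilon(t)}{2}\big(\|\bar x^*\|^2 - \|x_{\epsilon(t)}\|^2\big) + \text{lower order},
\end{eqnarray*}
so that $\epsilon(t)\|x(t)-x_{\epsilon(t)}\|^2 \to 0$. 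Since this only yields $\|x(t)-x_{\epsilon(t)}\|^2 = o(1/\epsilon(t))$, I would use hypothesis (i), $t^2\epsilon(t)\to+\infty$, to turn the $\mathcal{O}(1/t^2)$-type decay of $\mathcal{H}(t)$ into an actual $o(1)$ bound on $\|x(t)-x_{\epsilon(t)}\|^2$ along a subsequence; more precisely, from $t^2\mathcal{H}(t) \le C + C\int_{t_0}^t s\epsilon(s)\,ds$ and the fact that $\int_{t_0}^t s\epsilon(s)\,ds = o(t^2\epsilon(t))$ would be too strong, one instead argues by contradiction: if $\liminf\|x(t)-\bar x^*\|>0$ then, using \eqref{z26}, $\liminf\|x(t)-x_{\epsilon(t)}\|>0$, whence $\epsilon(t)\|x(t)-x_{\epsilon(t)}\|^2$ is bounded below by $c\,\epsilon(t)$; but multiplying the differential inequality by $t^2$ and integrating, the coercive term $\int_{t_0}^t s\epsilon(s)\|x(s)-x_{\epsilon(s)}\|^2\,ds$ would have to be finite (or grow slower than $\int s\epsilon(s)$), and hypothesis (i) combined with $\int \epsilon(t)/t\,dt<\infty$ forces $\int_{t_0}^{+\infty}s\epsilon(s)\,ds$ to be infinite while the bounded-below integrand makes the left side blow up faster — the contradiction yielding $\liminf_{t\to+\infty}\|x(t)-x_{\epsilon(t)}\|=0$, and then $\liminf_{t\to+\infty}\|x(t)-\bar x^*\|=0$ by \eqref{z26}.

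For the final \emph{full} limit statement, I would suppose the trajectory eventually stays in one of $B(0,\|\bar x^*\|)$ or its complement for $t\ge T$. In the first case $\|x(t)\| \le \|\bar x^*\|$, and since $x_{\epsilon(t)} = \mathrm{Proj}$-type minimizer with $\|x_{\epsilon(t)}\|\le\|\bar x^*\|$ as well, one gets a sign on $\|x_{\epsilon(t)}\|^2-\|\bar x^*\|^2 \le 0$ that, fed back into Lemma \ref{lemmaztt3.3}, improves the estimate to $\tfrac{\epsilon(t)}{2}\|x(t)-x_{\epsilon(t)}\|^2 \le \mathcal{L}_{\epsilon(t)}(x(t))-\mathcal{L}_{\epsilon(t)}(\bar x^*)$ with no unfavorable remainder, so that $\|x(t)-x_{\epsilon(t)}\|^2 \le \tfrac{2}{\epsilon(t)}\mathcal{H}(t) + (\text{sign-controlled terms})$; in the complementary case $\|x(t)\|\ge\|\bar x^*\|\ge\|x_{\epsilon(t)}\|$, a parallelogram/monotonicity argument gives $\langle x(t)-x_{\epsilon(t)}, x(t)\rangle$ the right sign and again removes the obstruction. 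Either way one upgrades $\liminf$ to $\lim$, and $\lim_{t\to+\infty}\|x(t)-\bar x^*\|=0$ follows from \eqref{z26} and the triangle inequality. The main obstacle I anticipate is the bookkeeping in the energy derivative: controlling the Tikhonov-curve velocity term $\langle \tfrac{d}{dt}x_{\epsilon(t)}, \cdot\rangle$ and the cross terms with $A^T(\lambda(t)-\bar\lambda^*)$ tightly enough that all residual terms are integrable against $t\,dt$ after the $t^2$-multiplication — this is where hypotheses (i) and (ii) must be used in tandem, and getting the contradiction argument in the right quantitative form is the delicate point.
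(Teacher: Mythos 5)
Your Case--I analysis (trajectory eventually outside $B(0,\|\bar{x}^*\|)$) is essentially the paper's: there the sign of $\|x(t)\|^2-\|\bar{x}^*\|^2$ makes the troublesome term in the differential inequality nonpositive, $t^2\widehat{\mathcal{E}}(t)$ becomes nonincreasing, and Lemma \ref{lemmaztt3.3} together with hypothesis (i) and \eqref{z26} gives strong convergence. (One simplification worth noting: the paper anchors the energy at the \emph{fixed} point $\bar{x}^*$, writing $\mathcal{L}_{\epsilon(t)}(x(t))-\mathcal{L}_{\epsilon(t)}(\bar{x}^*)$, and only invokes $x_{\epsilon(t)}$ through the static inequality of Lemma \ref{lemmaztt3.3}; this avoids the $\tfrac{d}{dt}x_{\epsilon(t)}$ term that you flag as delicate --- it never needs to be estimated.)

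The genuine gap is in your treatment of the remaining cases. First, the contradiction argument for the $\liminf$ claim does not close: after multiplying by $t^2$ and integrating, the coercive term $\tfrac{\alpha-1}{2}\int_{t_0}^{t}s\epsilon(s)\|x(s)-\bar{x}^*\|^2ds$ is bounded above by a constant \emph{times} $\int_{t_0}^{t}s\epsilon(s)ds$ (coming from the $\tfrac{(\alpha-1)\|\bar x^*\|^2}{2}\tfrac{\epsilon(t)}{t}$ and $t\dot\epsilon(t)$ terms), and a lower bound $\|x(s)-\bar{x}^*\|^2\ge c$ makes the left side grow at exactly the \emph{same} rate $c\int s\epsilon(s)ds$, not faster; no contradiction follows unless $c$ exceeds a threshold of order $\|\bar{x}^*\|^2$, which you cannot guarantee. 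Second, when the trajectory stays inside $B(0,\|\bar{x}^*\|)$ the energy route genuinely fails --- one only gets $\widehat{\mathcal{E}}(t)\le\tfrac{C}{t^2}\int_{t_0}^{t}s\epsilon(s)ds\to 0$ (via Lemma \ref{lemma2.2.3}), not $\mathcal{O}(1/t^2)$, so dividing by $\epsilon(t)$ gives nothing. The paper's proof replaces the energy estimate here by a weak-compactness argument: every weak sequential cluster point $\bar x$ of $x(t_n)$ satisfies $\mathcal{L}_\rho(\bar x,\bar\lambda^*)\le\mathcal{L}_\rho(\bar x^*,\bar\lambda^*)$ by Theorem \ref{theorem:ztt}, hence $\bar x\in\mathcal{D}(\bar\lambda^*)\subseteq S$ by \eqref{ztt: onefang} (this is where $\rho>0$ is needed), and then weak lower semicontinuity of the norm plus $\|x(t_n)\|<\|\bar x^*\|$ and minimality of $\|\bar x^*\|$ force $\bar x=\bar x^*$ and $\|x(t)\|\to\|\bar x^*\|$, upgrading weak to strong convergence. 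The third case (trajectory crossing the sphere infinitely often) is handled by applying this argument along a sequence with $\|x(t_n)\|=\|\bar x^*\|$, which yields the $\liminf$ statement. Your proposal is missing this weak-cluster-point mechanism entirely, and without it neither the $\liminf$ claim nor the in-ball case of the full limit is established.
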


\begin{proof}
Depending upon the sign of the term $\|\bar{x}^*\|-\|x(t)\|$, we analyze separately the following three situations. 

{\bf Case I:} There exists a large enough $T$ such that $\{x(t): t\geq T\}$ stays in the complement of $B(0,\|\bar{x}^*\|)$. In this case, 
\begin{eqnarray}\label{dudu8}
	\|x(t)\|\geq\|\bar{x}^*\|, \quad \forall t\geq T.
\end{eqnarray}
Since $\bar{x}^*\in\mathcal{X}$ is the minimal norm solution of the problem \eqref{z1},  there exists an optimal solution $\bar{\lambda}^*\in\mathcal{Y}$ of the Lagrange dual problem \eqref{dudu5} such that $(\bar{x}^*,\bar{\lambda}^*)\in\mathcal{X}\times\mathcal{Y}$ is a primal-dual optimal solution, i.e., $(\bar{x}^*,\bar{\lambda}^*)\in\Omega$.

Denote
\begin{eqnarray}\label{z24}
	\widehat{\mathcal{E}}(t)&=&\mathcal{L}_{\epsilon(t)}(x(t))-\mathcal{L}_{\epsilon(t)}(\bar{x}^*)
	+\frac{1}{2}\|\frac{\alpha-1}{ t}(x(t)-\bar{x}^*)+\dot{x}(t)\|^2\nonumber\\
	&&+\frac{\alpha-1}{2t^2}\|\lambda(t)-\bar{\lambda}^*\|^2.
\end{eqnarray}
Using the method similar to the proof of Lemma \ref{lemma2.2}, we obtain
\begin{eqnarray*}
	\frac{2}{t}\widehat{\mathcal{E}} (t)+\dot{\widehat{\mathcal{E}}}(t)
	&\leq&\frac{(3-\alpha)}{t}(\mathcal{L}_{\rho}(x(t),\bar{\lambda}^*)
	-\mathcal{L}_{\rho}(\bar{x}^*,\bar{\lambda}^*))\\
	&&+\frac{1}{2t}\left[t\dot{\epsilon}(t)+(3-\alpha)\epsilon(t)\right](\|x(t)\|^2-\|\bar{x}^*\|^2)\\
	&&-\frac{(\alpha-1)\epsilon(t)}{2t}\|x(t)-\bar{x}^*\|^2-\frac{\rho(\alpha-1)}{2t}\|Ax(t)-b\|^2, \quad \forall t\geq t_{0}.
\end{eqnarray*}
Since $\alpha\geq3$ and  $\mathcal{L}_{\rho}(x(t),\bar{\lambda}^*)
-\mathcal{L}_{\rho}(\bar{x}^*,\bar{\lambda}^*)\geq0$, we have 
\begin{eqnarray*}
\frac{2}{t}\widehat{\mathcal{E}} (t)+\dot{\widehat{\mathcal{E}}}(t)\leq
\frac{1}{2t}\left[t\dot{\epsilon}(t)+(3-\alpha)\epsilon(t)\right](\|x(t)\|^2-\|\bar{x}^*\|^2), \quad \forall t\geq t_{0},
\end{eqnarray*}
which together with $\dot{\epsilon}(t)\leq0$ and \eqref{dudu8} implies 
\begin{eqnarray*}
	\frac{2}{t}\widehat{\mathcal{E}} (t)+\dot{\widehat{\mathcal{E}}}(t)&\leq&0, \quad \forall t\geq T.
\end{eqnarray*}
Multiplying both sides of the above inequality by $t^{2}$, we have
\begin{eqnarray*}
	\frac{d}{dt}\left(t^{2}\widehat{\mathcal{E}}(t)\right)\leq0, \quad \forall t\geq T.
\end{eqnarray*}
Integrating it  from $T$ to $t$, we have
\begin{eqnarray*}
	t^{2}\widehat{\mathcal{E}}(t)\leq T^2\widehat{\mathcal{E}}(T), \quad \forall t\geq T.
\end{eqnarray*}
As a consequence,
\begin{eqnarray*}
	\widehat{\mathcal{E}}(t)\leq \frac{T^2\widehat{\mathcal{E}}(T)}{t^{2}}, \quad \forall t\geq T.
\end{eqnarray*}
By using \eqref{z24}, we can obtain
\begin{eqnarray*}
	\mathcal{L}_{\epsilon(t)}(x(t))-\mathcal{L}_{\epsilon(t)}(\bar{x}^*)\leq\widehat{\mathcal{E}}(t)
	\leq\frac{T^2\widehat{\mathcal{E}}(T)}{t^{2}}, \quad \forall t\geq T.
\end{eqnarray*}
By using Lemma \ref{lemmaztt3.3}, we have
\begin{eqnarray}\label{z29}
	\|x(t)-x_{\epsilon(t)}\|^2+\|x_{\epsilon(t)}\|^2-\|\bar{x}^*\|^2\nonumber\leq\frac{2T^2\widehat{\mathcal{E}}(T)}{t^{2}\epsilon(t)}, \quad \forall t\geq T.
\end{eqnarray}
It follows from \eqref{z26} and $\lim_{t\rightarrow+\infty}t^2\epsilon(t)=+\infty$ that
\begin{eqnarray*}
	\lim_{t\rightarrow+\infty}\|x(t)-\bar{x}^*\|=0.
\end{eqnarray*}

{\bf Case II:} There exists a large enough $T$ such that $\{x(t): t\geq T\}$ stays in $B(0,\|\bar{x}^*\|)$. In this case,
\begin{eqnarray}\label{dudu10}
	\|x(t)\|<\|\bar{x}^*\|, \quad \forall t\geq T.
\end{eqnarray}
Let $\bar{x}\in\mathcal{X}$ be a weak sequential cluster point of $(x(t))_{t\geq t_{0}}$ and let $\{t_n\}_{n\in \mathbb{N}}\subseteq[T, +\infty)$ satisfy $t_n\to+\infty$ and 
\begin{eqnarray}\label{z31}
	x(t_{n})\rightharpoonup\bar{x}   \quad as \quad n\rightarrow +\infty.
\end{eqnarray}
It follows that
\begin{eqnarray*}\label{fyp27-1}
	\mathcal{L}_{\rho}(\bar{x},\bar{\lambda}^*)\leq\liminf_{n\rightarrow+\infty}\mathcal{L}_{\rho}(x(t_n),\bar{\lambda}^*).
\end{eqnarray*}
By using Theorem \ref{theorem:ztt}, we have
\begin{eqnarray*}\label{z30}
	\lim_{n\rightarrow+\infty}\mathcal{L}_{\rho}(x(t_n),\bar{\lambda}^*)-\mathcal{L}_{\rho}(\bar{x}^*,\bar{\lambda}^*)=0.
\end{eqnarray*}
Thus, we can obtain 
\begin{eqnarray*}
	\mathcal{L}_{\rho}(\bar{x},\bar{\lambda}^*)\leq\mathcal{L}_{\rho}(\bar{x}^*,\bar{\lambda}^*).
\end{eqnarray*}
Because of $(\bar{x}^*,\bar{\lambda}^*)\in\Omega$, we have
\begin{eqnarray*}
	\mathcal{L}_{\rho}(\bar{x}^*,\bar{\lambda}^*)=\min_{x\in\mathcal{X}}\mathcal{L}_{\rho}(x,\bar{\lambda}^*)\leq\mathcal{L}_{\rho}(\bar{x},\bar{\lambda}^*))\leq\mathcal{L}_{\rho}(\bar{x}^*,\bar{\lambda}^*).
\end{eqnarray*}
As a consequence,
\begin{eqnarray*}
	\mathcal{L}_{\rho}(\bar{x},\bar{\lambda}^*)=\mathcal{L}_{\rho}(\bar{x}^*,\bar{\lambda}^*)=\min_{x\in\mathcal{X}}\mathcal{L}_{\rho}(x,\bar{\lambda}^*),
\end{eqnarray*}
which implies
\begin{eqnarray*}
	\bar{x}\in\arg\min_{x\in\mathcal{X}}\mathcal{L}_{\rho}(x,\bar{\lambda}^*).
\end{eqnarray*}
It follows from \eqref{ztt: onefang} that  $\bar{x}\in S$. By using \eqref{dudu10}, we have
\begin{eqnarray*}
	\limsup_{n\rightarrow+\infty}\|x(t_{n})\|\leq\|\bar{x}^*\|.
\end{eqnarray*}
On the other hand, since $\|\cdot\|$ is weakly lower semicontinuous, it follows from \eqref{z31} that
\begin{eqnarray*}
	\|\bar{x}\|\leq\liminf_{t_{n}\rightarrow+\infty}\|x(t_{n})\|\leq\limsup_{n\rightarrow+\infty}\|x(t_{n})\|\le\|\bar{x}^*\|,
\end{eqnarray*}
which together with $\bar{x}\in S$ and $\bar{x}^*=Proj_{S}0$ yields $\bar{x}=\bar{x}^*$. Thus, the trajectory $(x(t))_{t\geq t_0}$ has a unique weak sequential cluster point $\bar{x}^*$, which implies
\begin{eqnarray*}
	x(t)\rightharpoonup \bar{x}^*   \quad as \quad t\rightarrow+\infty.
\end{eqnarray*}
Using again \eqref{dudu10}, we obtain
\begin{eqnarray*}
	\limsup_{t\rightarrow+\infty}\|x(t)\|\leq\|\bar{x}^*\|.
\end{eqnarray*}
Since $x(t)\rightharpoonup \bar{x}^*$, we have
$$\|\bar{x}^*\|\leq\liminf_{t\rightarrow+\infty}\|x(t)\|.$$
Combing the last two inequalities, we can obtain
\begin{eqnarray*}
	\lim_{t\rightarrow+\infty}\|x(t)\|=\|\bar{x}^*\|.
\end{eqnarray*}
Hence,
\begin{eqnarray*}
	\lim_{t\rightarrow+\infty}\|x(t)-\bar{x}^*\|=0.
\end{eqnarray*}

{\bf Case III:} For any $T\geq t_0$, the trajectory $\{x(t): t\geq T\}$ does not remain in the ball $B(0,\|\bar{x}^*\|)$, and does not remain in the complement of the ball $B(0,\|\bar{x}^*\|)$, too.  In this case, there exists a sequence $(t_n)_{n\in\mathbb{N}}\subseteq[t_0,+\infty)$ such that $t_n\rightarrow+\infty$ as $n\rightarrow+\infty$, and 
\begin{eqnarray}\label{fyp27-2}
	\|x(t_n)\|=\|\bar{x}^*\|, \quad \forall n\in\mathbb{N}.
\end{eqnarray}
Let $\hat{x}$ be a weak sequential cluster point of $(x(t_n))_{n\in\mathbb{N}}$. By using arguments similar to Case II, we can obtain
$$x(t_n)\rightharpoonup \bar{x}^*, \quad as \quad n\rightarrow+\infty.$$
This together with \eqref{fyp27-2} implies 
\begin{eqnarray*}
	\lim_{n\rightarrow+\infty}\|x(t_n)-\bar{x}^*\|=0.
\end{eqnarray*}
As a consequence,
\begin{eqnarray*}
	\liminf_{t\rightarrow+\infty}\|x(t)-\bar{x}^*\|=0.
\end{eqnarray*}
The proof is complete.	
\end{proof}

\begin{remark}
	Attouch, Chbani, and Riahi proved in \cite[Theorem 4.1]{AttouchZH2018}  that the trajectory $x(t)$ of $\text{(AVD)}_{\alpha, \epsilon}$ converges strongly to the minimal norm solution of the unconstrained optimization problem \eqref{up-f} when the Tikhonov regularization parameter decreases slowly to zero. Following the approach developed in  \cite[Theorem 4.1]{AttouchZH2018}, we prove in Theorem \ref{theoremztt3.1} that  the primal trajectory $x(t)$ of the primal-dual dynamical system \eqref{z2}  enjoys the same convergence property. So,  Theorem \ref{theoremztt3.1}  can be regarded as an extension of \cite[Theorem 4.1]{AttouchZH2018} from the unconstrained case to the optimization problem \eqref{z1}.
\end{remark}

\begin{remark}\label{corollary2.1}
	When $\epsilon(t)=\frac{c}{t^r}$ with $0<r<2$ and $c>0$, we have $\lim_{t\rightarrow+\infty}t^2\epsilon(t)=+\infty$ and $\int_{t_0}^{+\infty}\frac{\epsilon(t)}{t}<+\infty$. Thus, we can obtain all conclusions of Theorem \ref{theoremztt3.1}.
\end{remark}

\section{Numerical experiments}

In this section, we perform some numerical experiments to illustrate our approach associated with the dynamical system \eqref{z2} for solving the linear equality constrained optimization problem \eqref{z1}. Here, we consider two examples: The first  is a linear equality constrained optimization problem with the convex but not strongly convex objective function. The second  is the quadratic programming problem with a linear equality constraint. All codes are run on a PC (with 2.70GHz Dual-Core Intel Core i5 and 4GB memory) under MATLAB Version R2017b.

\subsection{The linear equality constrained optimization problem with the convex but not strongly convex objective function}

Consider the linear equality constrained convex optimization problem
\begin{eqnarray}\label{zyccxztt12}
	\min_{x\in\mathbb{R}^3}  f(x)=(mx_{1}+nx_{2}+ex_{3})^2, \quad\text{ s.t. } Ax=b,
\end{eqnarray}
where $f:\mathbb{R}^3\rightarrow\mathbb{R},$ $A=(m,-n,e)^T$ with $m,n,e\in\mathbb{R}\backslash\{0\}$, and $b=0$. This example is motivated by the example considered in \cite[Section 4]{Laszlo(2023)} for the unconstrained optimization problem. It is easily verified that the solution set is
$\{(x_{1},0,-\frac{m}{e}x_{1}): x_{1}\in\mathbb{R}\}$ and the optimal objective function value is $0$. Obviously, the minimal norm solution of this convex optimization problem is $\bar{x}^*=(0,0,0)$. In our numerical experiments, we take the starting points $x(1)=(1,1,1)$, $\lambda(1)=1$, $\dot{x}(1)=(1,1,1)$ and solve the dynamical system \eqref{z2} numerically with the ode23 in MATLAB.

In the first experiment, we take $m=5,$ $n=1$,  $e=1$, $\alpha=13$, $\epsilon(t)=\frac{3}{t^r}$ and  $\rho=1$. Figure \ref{fig:testfig} shows the behaviors  of  $\|x(t)-\bar{x}^*\|$ and $\mathcal{L}_{\rho}(x(t),\lambda^*)-\mathcal{L}_{\rho}(x^*,\lambda^*)$ along  the trajectory $(x(t))_{t\geq t_{0}}$ generated by our dynamical system \eqref{z2} under the different choices of $r$ with $0<r<2$.
\begin{figure}[htbp]
	\centering
	\begin{minipage}[t]{0.99\linewidth}
		\centering
		\includegraphics[width=2.5in]{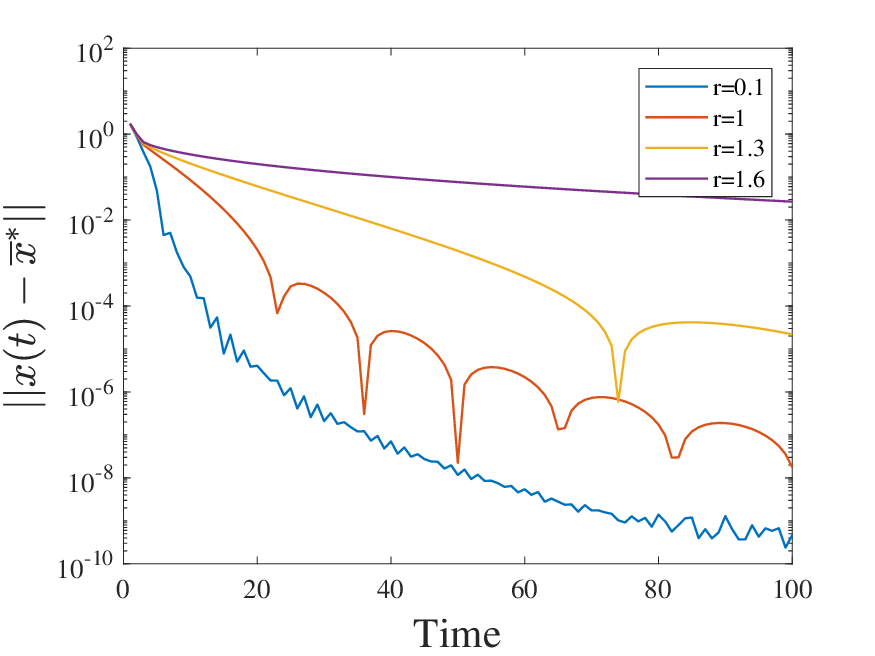}
		\centering
		\includegraphics[width=2.5in]{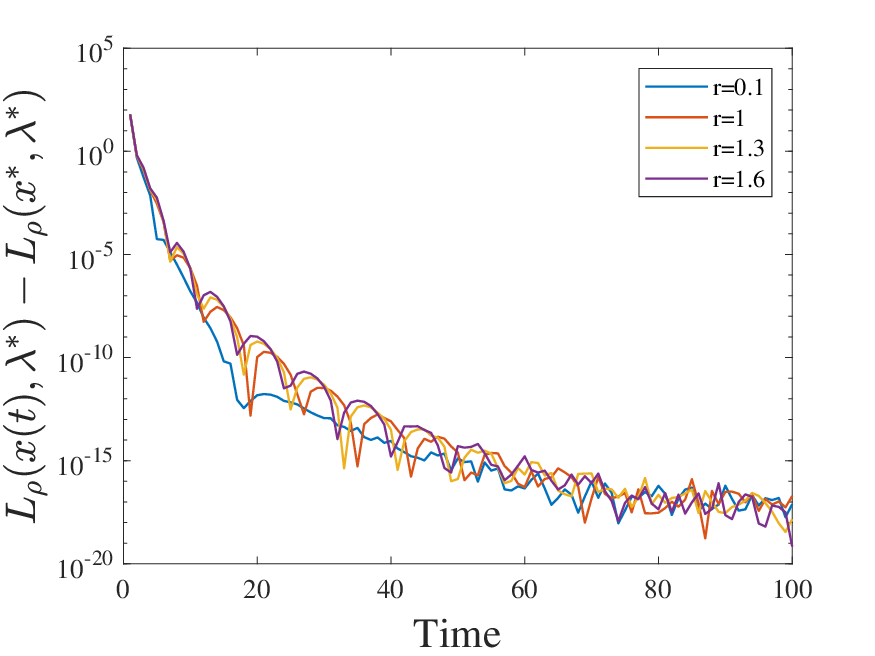}
	\end{minipage}%
	\caption{Error analysis of dynamical system \eqref{z2}  with different Tikhonov regularization parameters for problem \eqref{zyccxztt12}}
	\label{fig:testfig}
\end{figure}

As shown in Figure \ref{fig:testfig}, the trajectory $(x(t))_{t\geq t_{0}}$ generated by \eqref{z2} converges to the minimizer of minimal norm. Further, we can also observe that our dynamical system \eqref{z2} performs better in error $\|x(t)-\bar{x}^*\|$ when the parameter $r$ is small, and that the error $\mathcal{L}_{\rho}(x(t),\lambda^*)-\mathcal{L}_{\rho}(x^*,\lambda^*)$ is not very sensitive to the Tikhonov regularization parameter.

In the second experiment, fix $\alpha=13$, $\rho=1$ and $\epsilon(t)=\frac{3}{t^r}$ with $r=0.5$ in our proposed dynamical system \eqref{z2}, and fix $\alpha=13$, $\beta(t)=1$ and $\varepsilon(t)=0$ in $\text{(He-AVD)}_{\alpha}$. Then, under different choices of $m$, $n$ and $e$, we  illustrate the behaviors of the trajectories $(x(t))_{t\geq t_{0}}$ of Tikhonov regularized dynamical system \eqref{z2} and $\text{(He-AVD)}_{\alpha}$, where our proposed dynamical system \eqref{z2} is a Tikhonov regularized version of $\text{(He-AVD)}_{\alpha}$ with $\beta(t)=1$ and $\varepsilon(t)=0$.
\begin{figure*}[h]
	\centering
	\subfloat[m=5, n=1, e=1.]
	{
		\begin{minipage}[t]{0.31\linewidth}
			\centering
			\includegraphics[width=1.9in]{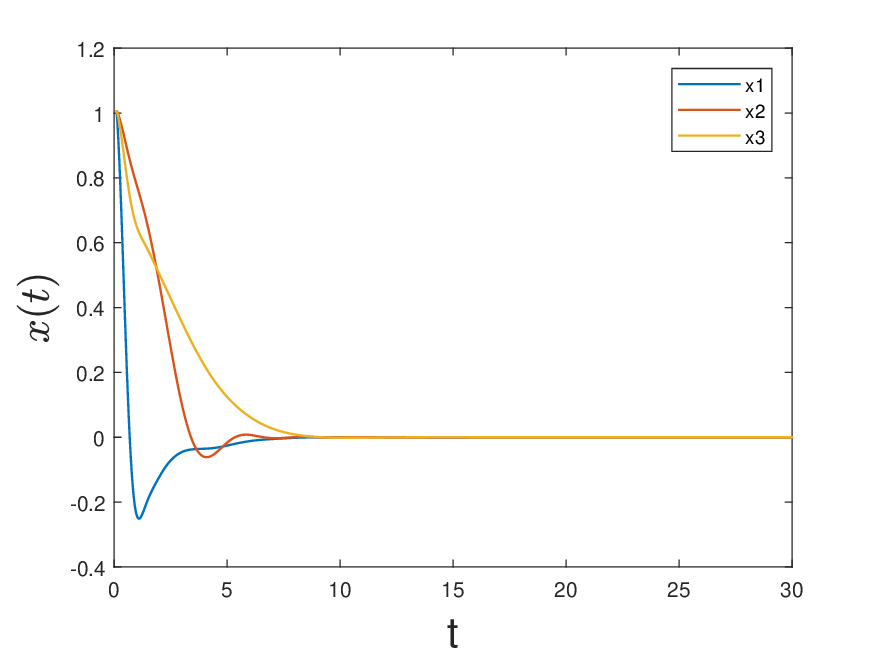}
		\end{minipage}%
	}
	\subfloat[m=50, n=10, e=15.]
	{
		\begin{minipage}[t]{0.31\linewidth}
			\centering
			\includegraphics[width=1.9in]{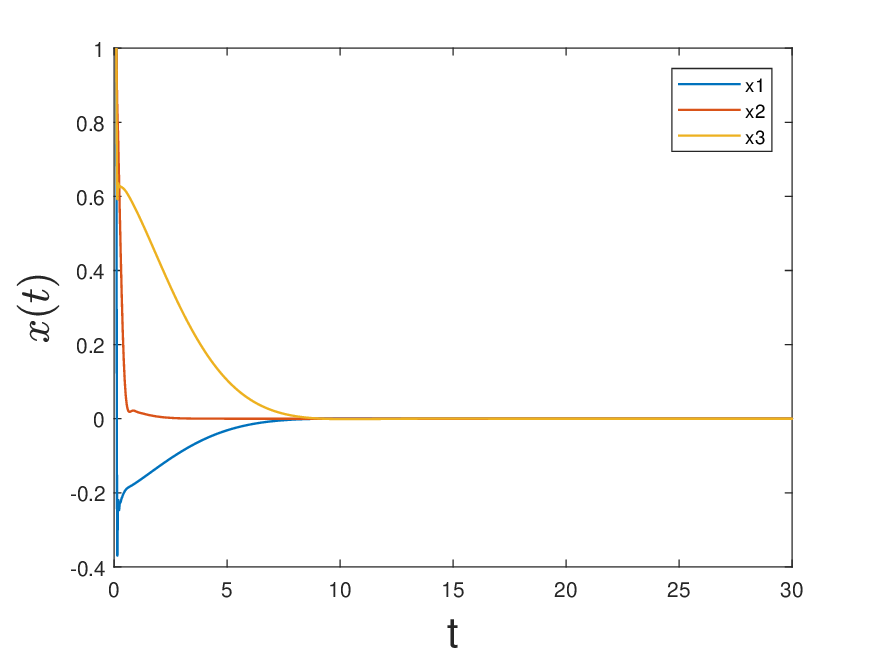}
		\end{minipage}%
	}
	\subfloat[m=200, n=30, e=50.]
	{
		\begin{minipage}[t]{0.31\linewidth}
			\centering
			\includegraphics[width=1.9in]{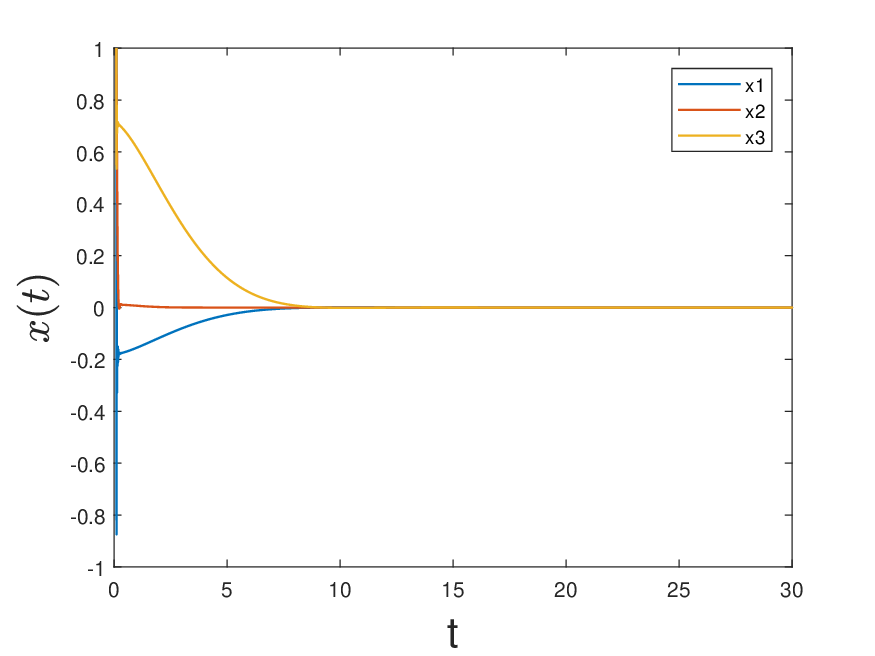}
		\end{minipage}%
	}
	\caption{The behavior of the trajectory $(x(t))_{t\geq t_0}$ generated by dynamical system \eqref{z2} for problem \eqref{zyccxztt12}}
	\label{fig:titstfig}
	\centering
\end{figure*}

\begin{figure*}[h]
	\centering
	\subfloat[m=5, n=1, e=1.]
	{
		\begin{minipage}[t]{0.31\linewidth}
			\centering
			\includegraphics[width=1.9in]{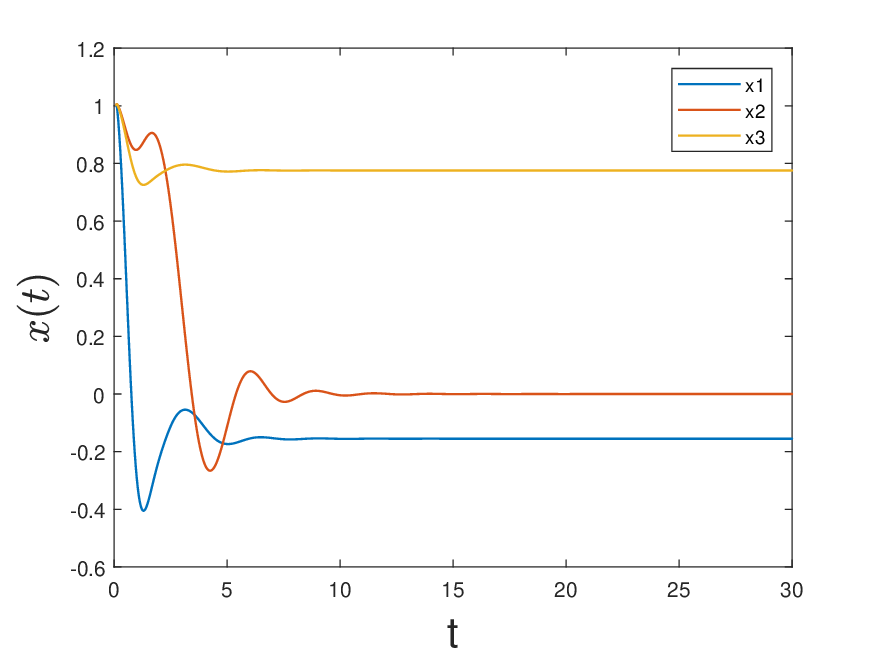}
		\end{minipage}%
	}
	\subfloat[m=50, n=10, e=15.]
	{
		\begin{minipage}[t]{0.31\linewidth}
			\centering
			\includegraphics[width=1.9in]{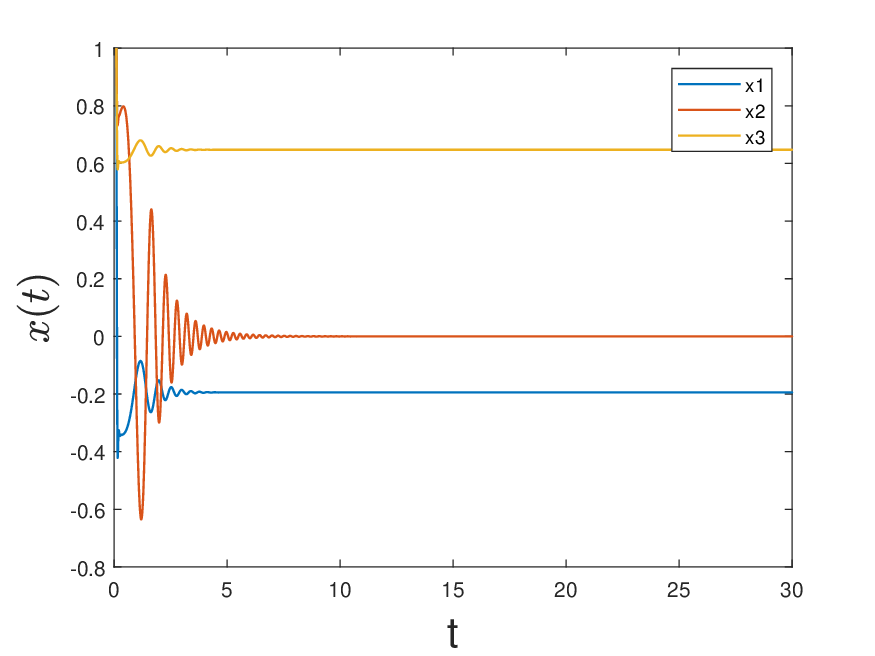}
		\end{minipage}%
	}
	\subfloat[m=200, n=30, e=50.]
	{
		\begin{minipage}[t]{0.31\linewidth}
			\centering
			\includegraphics[width=1.9in]{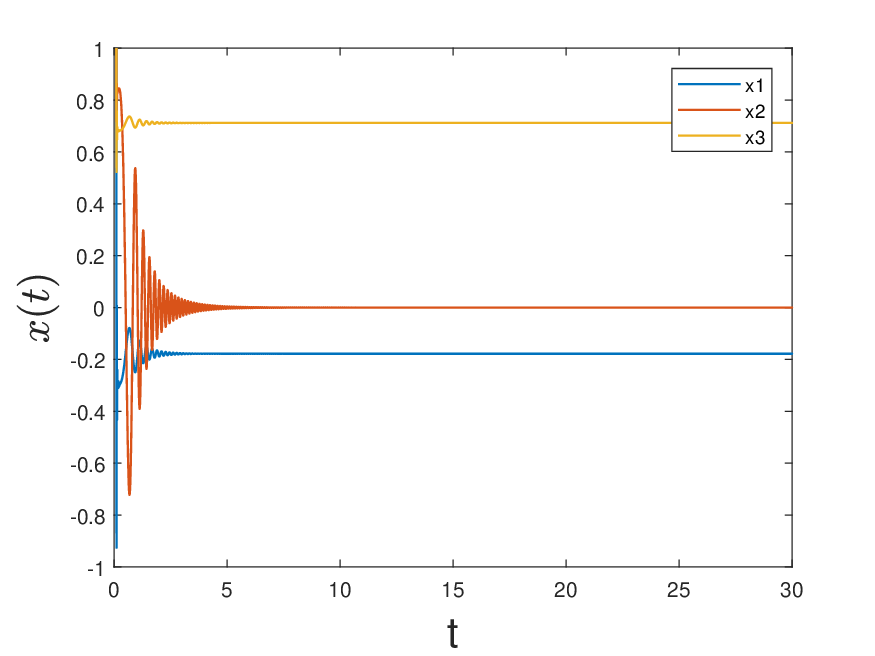}
		\end{minipage}%
	}
	\caption{The behavior of the trajectory $(x(t))_{t\geq t_0}$ generated by $\text{(He-AVD)}_{\alpha}$ for problem \eqref{zyccxztt12}}
	\label{fig:titstcxfig}
	\centering
\end{figure*}
As shown in Figure \ref{fig:titstfig}, the trajectory $(x(t))_{t\geq t_{0}}$ generated by Tikhonov regularized dynamical system \eqref{z2} converges to the minimal norm solution $\bar{x}^*=(0,0,0)$ under different choices of $m$, $n$ and $e$. However, Figure \ref{fig:titstcxfig} shows that the trajectory $(x(t))_{t\geq t_{0}}$ generated by $\text{(He-AVD)}_{\alpha}$ cannot converge to the minimal norm solution $\bar{x}^*=(0,0,0)$. Thus, for the problem \eqref{zyccxztt12}, the numerical results of the second experiment show that Tikhonov regularization term can guarantee that the generated trajectory $(x(t))_{t\geq t_{0}}$ converges to the minimal norm solution.

\subsection{The quadratic programming problem}

In this subsection, we consider the linear equality constrained quadratic programming problem
\begin{equation}\label{LCPP}
	\min_{x\in\mathbb{R}^n} f(x)=\frac{1}{2}x^T Mx+q^T x, \quad \text{ s.t. } Ax=b,
\end{equation}
where $A\in\mathbb{R}^{m\times n}$,  $b\in\mathbb{R}^m$,  $q\in\mathbb{R}^n$, and  $M\in\mathbb{R}^{n\times n}$ is a symmetric and positive semidefinite matrix. 

We test the proposed dynamical method \eqref{z2}  and  make  comparisons with  $\text{(Z-AVD)}_{\alpha, \theta}$ in \cite{ZengXLandLeiJLandChenJ(2022)} and the primal\mbox{-}dual dynamical system with vanishing damping $\text{(PD\mbox{-}AVD)}$ in \cite{BNguyen2022}. We  use the ode23 in MATLAB to solve the dynamical systems numerically on the interval $[1,100]$.

Here, take the starting point $(x(1),\lambda(1),\dot{x}(1))=\mathbf{1}^{(2n+m)\times1}$, $\epsilon(t)=\frac{1}{t^r}$ with $r>2$, $\alpha=15$ and $\rho=1$ in our dynamical system, take the starting point $(x(1),\lambda(1),\dot{x}(1),\dot{\lambda}(1))=\mathbf{1}^{2(n+m)\times1}$, $\theta=\frac{1}{\alpha-1}$, $\alpha=15$ and $\beta=1$ in (PD-AVD) and take the starting point $(x(1),\lambda(1),\dot{x}(1),\dot{\lambda}(1))=\mathbf{1}^{2(n+m)\times1}$, $\theta=\frac{1}{2}$ and $\alpha=15$ in $\text{(Z-AVD)}_{\alpha, \theta}$.  Set $m=20$ and $n=40$. Let $b$ be generated by the uniform distribution. Let  $q$, $A$, and $M=H^T H$ with $H\in\mathbb{R}^{n\times n}$ be generated by the standard Gaussian distribution. We get the optimal value $f(x^*)$  by using the Matlab function $quadprog$ with tolerance $10^{-15}$. The results are depicted in Figure \ref{fig:tgtpstfig}.

\begin{figure}[htbp]
	\centering
	\begin{minipage}[t]{0.99\linewidth}
		\centering
		\includegraphics[width=2.5in]{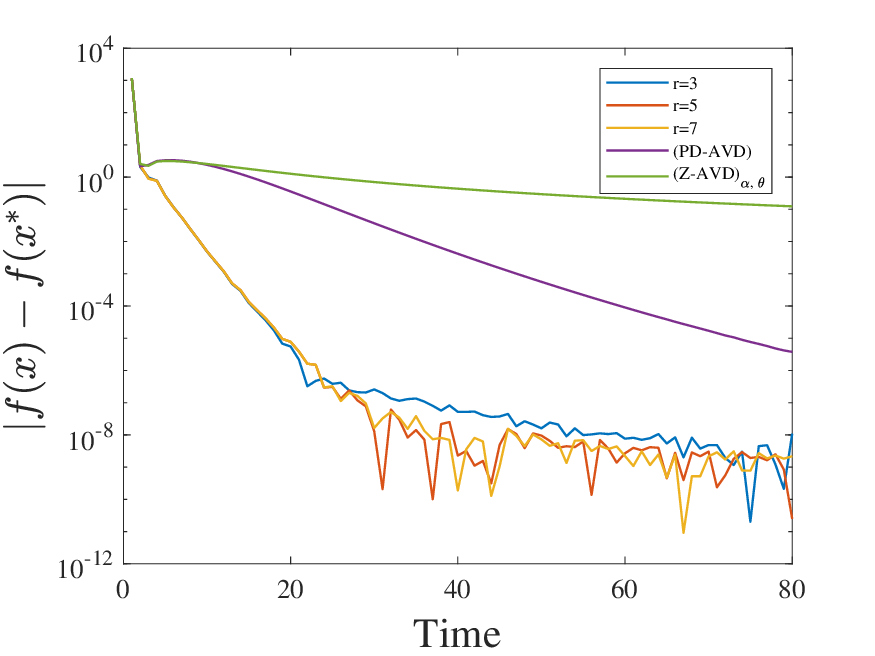}
		\centering
		\includegraphics[width=2.5in]{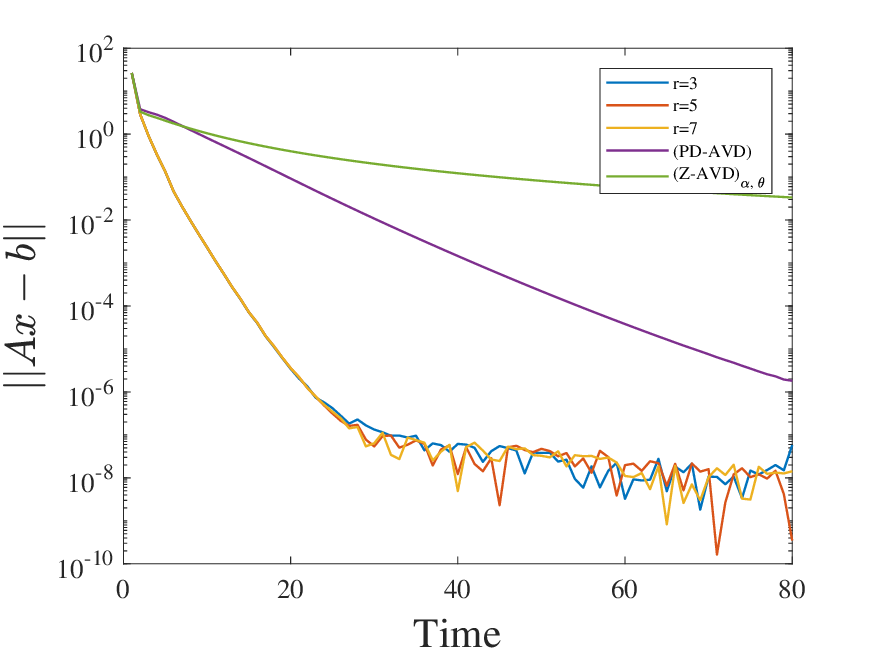}
	\end{minipage}%
	\caption{Comparisons of dynamical system \eqref{z2}, (PD-AVD) and $(Z-AVD)_{\alpha, \theta}$ for problem \eqref{LCPP}}
	\label{fig:tgtpstfig}
\end{figure}

As shown in Figure \ref{fig:tgtpstfig}, our dynamical system \eqref{z2} with a fast vanishing Tikhonov regularization parameter performs better than (PD-AVD) and $\text{(Z-AVD)}_{\alpha, \theta}$ on the quadratic programming problem \eqref{LCPP}. We also observe in Figure \ref{fig:tgtpstfig} that the errors of the objective function and the feasibility violation along the trajectory generated by our dynamical system are not very sensitive to the fast vanishing Tikhonov regularization parameter.

\section*{Disclosure statement}
No potential conflict of interest was reported by the authors.

\end{document}